\begin{document}
\DeclarePairedDelimiter{\floor}{\lfloor}{\rfloor}

\newtheorem{thm}{Theorem}[section]
\newtheorem*{thm*}{Theorem}
\newtheorem{lem}[thm]{Lemma}
\newtheorem*{lem*}{Lemma}
\newtheorem{prop}[thm]{Proposition}
\newtheorem*{prop*}{Proposition}
\newtheorem{cor}[thm]{Corollary}
\newtheorem{defn}[thm]{Definition}
\theoremstyle{remark}
\newtheorem*{remark}{Remark}

\numberwithin{equation}{section}

\newcommand{\Z}{{\mathbb Z}} %cph changed from \mathbf
\newcommand{\Q}{{\mathbb Q}}
\newcommand{\R}{{\mathbb R}}
\newcommand{\C}{{\mathbb C}}
\newcommand{\N}{{\mathbb N}}
\newcommand{\FF}{{\mathbb F}}
\newcommand{\fq}{\mathbb{F}_q}
\newcommand{\feq}{\overline{\mathbb F}_q}

\newcommand{\rmk}[1]{\footnote{{\bf Comment:} #1}}

\renewcommand{\mod}{\;\operatorname{mod}}
\newcommand{\ord}{\operatorname{ord}}
\newcommand{\TT}{\mathbb{T}}
\renewcommand{\i}{{\mathrm{i}}}
\renewcommand{\d}{{\mathrm{d}}}
\renewcommand{\^}{\widehat}
\newcommand{\HH}{\mathbb H}
\newcommand{\Vol}{\operatorname{vol}}
\newcommand{\area}{\operatorname{area}}
\newcommand{\tr}{\operatorname{tr}}
\newcommand{\norm}{\mathcal N} % norm =(\frac{ n+\sqrt{n^2-4}} 2)^2
\newcommand{\intinf}{\int_{-\infty}^\infty}
\newcommand{\ave}[1]{\left\langle#1\right\rangle} %  average
\newcommand{\Var}{\operatorname{Var}}
\newcommand{\Prob}{\operatorname{Prob}}
\newcommand{\sym}{\operatorname{Sym}}
\newcommand{\disc}{\operatorname{disc}}
\newcommand{\delt}{\operatorname{\delta}}
\newcommand{\tdeg}{\operatorname{tot.deg}}
\newcommand{\pisc}{\operatorname{disc}_{+}}
\newcommand{\Berl}{\operatorname{Berl}}
\newcommand{\hgt}{\operatorname{ht}}
\newcommand{\gal}{\operatorname{Gal}}
\newcommand{\CA}{{\mathcal C}_A}
\newcommand{\cond}{\operatorname{cond}} % conductor
\newcommand{\lcm}{\operatorname{lcm}}
\newcommand{\Kl}{\operatorname{Kl}} %Kloosterman sum
\newcommand{\leg}[2]{\left( \frac{#1}{#2} \right)}  % Legendre symbol

\newcommand{\sumstar}{\sideset \and^{*} \to \sum}

\newcommand{\LL}{\mathcal L} %L-function of u
\newcommand{\sumf}{\sum^\flat}
\newcommand{\Hgev}{\mathcal H_{2g+2,q}}
\newcommand{\USp}{\operatorname{USp}}
\newcommand{\conv}{*}
\newcommand{\dist} {\operatorname{dist}}
\newcommand{\CF}{c_0} % Fejer constant
\newcommand{\kerp}{\mathcal K}

\newcommand{\fs}{\mathfrak S}
\newcommand{\rest}{\operatorname{Res}} % resultant
\newcommand{\af}{\mathbb A} % affine line
\let\emptyset\varnothing
\title[Chowla's conjecture for the rational function field 2]
{The autocorrelation of the M\"obius function and Chowla's
conjecture for the rational function field in characteristic 2}
\author{Dan Carmon}
\address{Raymond and Beverly Sackler School of Mathematical Sciences,
Tel Aviv University, Tel Aviv 69978, Israel}

\date{\today}

\thanks{The research leading to these results has received funding from the European
Research Council under the European Union's Seventh Framework Programme
(FP7/2007-2013) / ERC grant agreement n$^{\text{o}}$ 320755.}
%\thanks{This work was   supported by  the Israel Science Foundation (grant No.
%1083/10).}
\begin{abstract}
We prove a function field version of Chowla's conjecture on the
autocorrelation of the M\"obius function in the limit of a large
finite field of characteristic 2.
\end{abstract}
\maketitle

\section{Introduction}

Let $\fq$ be a finite field  of $q$ elements, and let $\fq[x]$ be the
polynomial ring over $\fq$. The M\"obius function of a non-zero
polynomial $F\in \fq[x]$ is defined to be $\mu(F)=(-1)^r$ if
$F=cP_1\dots P_r$ with $0\neq c\in \fq$ and $P_1,\dots, P_r$ are
distinct monic irreducible polynomials, and $\mu(F)=0$ otherwise.
Let $M_n\subset \fq[x]$ be the set of monic polynomials of degree
$n$ over $\fq$, which is of size $\#M_n = q^n$.

For $r>0$,  distinct polynomials  $\alpha_1,\dots, \alpha_r \in
\fq[x]$ with $\deg \alpha_j<n$, and
 $\epsilon_i\in \{1,2\}$, not all even, set
\begin{equation}
  C(\alpha_1,\dots, \alpha_r;n):=
\sum_{F\in M_n} \mu(F+\alpha_1)^{\epsilon_1}\dots
\mu(F+\alpha_r)^{\epsilon_r}
\end{equation}

In \cite{Carmon Rudnick}, an upper bound on $|C(\alpha_1,\dots, \alpha_r;n)|$ was established for fields 
of odd characteristic, demonstrating that for any fixed $n>1$, $r>1$, we have  
$\lim_{q\to \infty} \frac 1{\#M_n}\sum_{F\in M_n}
C(\alpha_1,\dots, \alpha_r;n) = 0$. This is analogous to Chowla's conjecture over function fields,
in the limit of a large finite field.

This result has since found further applications. In \cite{LBS}, Bary-Soroker
utilizes a result similar to a part of the proof, named {\em square independence}, and computes a certain Galois group to be $S_n^r$. 
This computation then implies many equidistribution and independence results, proving function field analogues,
in the limit of a large base field, 
to myriad classical problems, such as the Hardy-Littlewood conjecture, and the additive and Titchmarsh divisor problems.
See \cite{ABSR} for more details and examples. 
%We note that the analogue to Chowla's conjecture could also be seen as a consequence of this computation, 
%although the implied asymptotics would be worse than those obtained directly in \cite{Carmon Rudnick}.
We stress that Bary-Soroker's computation, and any implications thereof, were only valid in odd characteristic,
due to square independence having been established only in odd characteristic.

In this paper, we shall provide a bound on $|C(\alpha_1,\dots, \alpha_r;n)|$ in the case of characteristic 2,
yielding the analogue to Chowla's conjecture in this setting. 
We shall also verify square independence in characteristic 2, thus extending the validity of Bary-Soroker's computation, and all
its implications.

Henceforth, we shall assume that $q$ is even. 
As in odd characteristic, for $r=1$ and $n>1$, we have $\sum_{F\in M_n} \mu(F) = 0$.
For $n=1$ we have $\mu(F)\equiv -1$ and the sum equals $(-1)^{\sum
  \epsilon_j} q$.
The case $n=2$ is a new special case in characteristic 2, and will be handled separately, in section \ref{sec:n2}.
For $n>2$, $r>1$ we show:

\begin{thm}\label{main thm}
Fix $r>1$ and assume that $n>2$ and that $q$ is even.
%Then for fixed $n$,
Then for any choice of distinct polynomials  $\alpha_1,\dots,
\alpha_r \in \fq[x]$ with $\max \deg \alpha_j<n$,  and
$\epsilon_i\in \{1,2\}$, not all even,
\begin{equation}\label{main thm eq}
| C(\alpha_1,\dots, \alpha_r;n)| \leq rn q^{n-\frac 12} + \frac 34 (r+3)n^2
q^{n-1}
\end{equation}
\end{thm}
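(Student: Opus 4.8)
The plan is to follow the strategy of \cite{Carmon Rudnick}, with the quadratic character replaced by an additive one and the discriminant replaced by a Berlekamp-type discriminant adapted to characteristic $2$.

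\emph{Step 1: a characteristic-$2$ Pellet--Stickelberger identity.} Following Swan and Berlekamp I would first establish that for squarefree monic $f\in\fq[x]$ of degree $n$,
\[
\mu(f)=(-1)^{n}\,\psi\bigl(\Berl(f)\bigr),
\]
where $\psi$ is a nontrivial additive character of $\fq$ and the Berlekamp discriminant $\Berl(f)\in\fq$ is extracted from the unit part of a lift $\pisc(f)$ of $\disc f$ to the Witt vectors of $\fq$; together with the fact that $\mu(f)^{2}$ equals $1$ when $f$ is squarefree and $0$ otherwise. Parametrizing $F=x^{n}+t_{n-1}x^{n-1}+\dots+t_{0}$ by $t=(t_{0},\dots,t_{n-1})\in\af^{n}$, this realizes $\Berl(F+\alpha_{j})$ as a rational function $\Berl_{j}$ on $\af^{n}$, regular on the squarefree locus $U_{j}=\{\,F+\alpha_{j}\text{ squarefree}\,\}$, and collapses the product defining $C$ into a single exponential sum: with $m=\#\{j:\epsilon_{j}=1\}\geq1$, $V=\bigcap_{j}U_{j}$ and $\Phi=\sum_{j:\epsilon_{j}=1}\Berl_{j}$,
\[
C(\alpha_{1},\dots,\alpha_{r};n)=(-1)^{nm}\sum_{t\in V(\fq)}\psi\bigl(\Phi(t)\bigr),
\]
the factors with $\epsilon_{j}=2$ contributing only the squarefreeness condition cutting out $V$.

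\emph{Step 2: square independence in characteristic $2$ --- the crux.} The essential point is that $\Phi$ is not Artin--Schreier trivial, i.e.\ $\Phi\neq g^{2}+g+c$ for any $g\in\feq(t_{0},\dots,t_{n-1})$ and $c\in\feq$; this replaces the ``square independence of discriminants'' used in odd characteristic. For each $j$ let $\Delta_{j}\subset\af^{n}$ be the reduced discriminant hypersurface $\{\,F+\alpha_{j}\text{ not squarefree}\,\}$, irreducible and defined by $\sqrt{\disc(F+\alpha_{j})}$, a polynomial of degree $n-1$ in $t$ (in characteristic $2$ the discriminant is a square). I would prove: (a) the $\Delta_{j}$ are pairwise distinct --- coincidence of $\Delta_{j}$ and $\Delta_{k}$ would make the discriminant polynomial invariant under translation of $t$ by $\alpha_{j}-\alpha_{k}$, and for $n>2$ the only such translation is trivial, forcing $\alpha_{j}=\alpha_{k}$; this degenerates exactly when $n=2$, where $\disc(F+\alpha_{j})$ is the square of the linear coefficient alone, which is where the hypothesis $n>2$ enters; (b) $\Berl_{j}$ has a pole along $\Delta_{j}$ whose polar part, read in the completion at the generic point of $\Delta_{j}$, cannot be absorbed into $\wp(\,\cdot\,)=(\,\cdot\,)^{2}+(\,\cdot\,)$: its order is odd, or its leading coefficient is a non-square in the (imperfect) function field $\feq(\Delta_{j})$. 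Since for a fixed $j_{0}$ with $\epsilon_{j_{0}}=1$ the other $\Berl_{j}$ are regular along $\Delta_{j_{0}}$, this obstruction survives in $\Phi$. I expect Step 2 --- precisely, the local analysis of the ramification of $\Berl$ along the discriminant locus --- to be the main obstacle, since in characteristic $2$ the Berlekamp discriminant is a lifted ``mod $2^{k}$'' invariant rather than a polynomial, so its polar behavior is subtler than the order-of-vanishing count available for the honest discriminant in odd characteristic.

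\emph{Step 3: the Weil bound with explicit constants.} Finally I would bound $\sum_{t\in V(\fq)}\psi(\Phi(t))$ by slicing $\af^{n}$ along a generic pencil of lines. On a line $L$ in general position $\Phi|_{L}$ has poles only at the (at most $n-1$) points of each $\Delta_{j}\cap L$ with $\epsilon_{j}=1$, hence at most $rn$ poles, and the Artin--Schreier sheaf $\mathcal L_{\psi}(\Phi|_{L})$ is nontrivial --- the bad pole of Step 2 persisting at a smooth transverse point of $\Delta_{j_{0}}\cap L$. The Weil bound for exponential sums on $\af^{1}$, via Grothendieck--Ogg--Shafarevich for $\mathcal L_{\psi}(\Phi|_{L})$ with the Swan conductors at its poles and at infinity estimated, then yields square-root cancellation on each good line with an implied constant of size $O(rn)$; summing over the $q^{n-1}$ lines gives the main term. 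The exceptional lines --- those meeting $\bigcup_{j}\mathrm{Sing}(\Delta_{j})$, tangent to some $\Delta_{j}$, or in one of finitely many bad pencil directions --- lie on a subvariety of codimension $\geq2$ whose degree one bounds by B\'ezout applied to the $\disc(F+\alpha_{j})$ and their partial derivatives, so there are at most about $rn^{2}q^{n-2}$ of them, each contributing at most $q$; careful accounting of all the pole orders and Swan conductors turns these contributions into the explicit constants $rn$ and $\tfrac34(r+3)n^{2}$ of \eqref{main thm eq}. Relative to \cite{Carmon Rudnick} the new input is Step 1 together with the Step 2 replacement of square independence by Artin--Schreier independence; Step 3 is the same cohomological bookkeeping, now for an additive rather than a multiplicative character sheaf.
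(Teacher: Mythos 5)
Your skeleton matches the paper's: a Berlekamp--Pellet identity $\mu(F)=(-1)^{\deg F}\chi_2(\Berl F)$ with $\chi_2$ the order-two additive character, reduction to one-variable Artin--Schreier sums along the pencil of lines in the constant-coefficient direction, the Weil--Perel'muter bound of size $\hgt(p)q^{1/2}$ on good lines, and a count of the bad lines. The genuine gap is in Step 3, in how you identify the bad lines. In characteristic $2$ one has $\disc=\delta^2$, so $B_{f+\alpha_j}=\xi_{f+\alpha_j}/\delta_{f+\alpha_j}^2$ and the poles produced by a root of $\delta_f$ of multiplicity $m$ have order at most $2m$; since the relevant residue fields are perfect, the top even-order Laurent coefficient is always a square and can always be absorbed into $H^2+H$. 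Concretely, at a simple root $t_0$ of $\delta_f$ (a line perfectly transverse to $\Delta_1$ at a smooth point), writing $p=c_2(t-t_0)^{-2}+c_1(t-t_0)^{-1}+\cdots$, the substitution $H=\sqrt{c_2}\,(t-t_0)^{-1}+\cdots$ removes the entire polar part exactly when $c_1^2=c_2$. This is a codimension-one algebraic condition on $(a_1,\dots,a_{n-1})$ invisible to transversality, singular loci, or pencil directions, so your B\'ezout count of geometrically degenerate lines does not bound the bad set. The paper's Proposition \ref{main prop} identifies the correct condition: full absorption forces $\xi_f\equiv H_D^2$ and $\xi_f'\equiv\delta_f'H_D\pmod{\delta_f}$, hence $\delta_f\mid\Xi_f=\xi_f\delta_f'^2-\xi_f'^2$, and the bad set is cut out by $\rest_t(\delta_f,\Xi_f)=0$ together with resultant conditions forcing the $\delta_{f+\alpha_j}$ to be pairwise coprime.

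A second, related gap: even granting the right defining equations, one must show they are not identically zero in $a$ and bound their degrees, and this is where essentially all of the paper's technical work lies --- computing $\xi_f=(\pisc F_0-\disc F_0)/4\bmod 2$ explicitly for the test families $x^n+ax^2+t$ ($n$ odd) and $x^n+ax^{n-1}+bx+t$ ($n$ even) via trinomial discriminant formulas modulo $8$, and a homogeneity lemma for resultant degrees that yields the constant $\tfrac34(r+3)n^2$. Your Step 2(b) gestures at the right kind of local obstruction but misstates it: the pole orders are even, ``leading coefficient a non-square'' is not the obstruction once the linear term of $\wp(y)=y^2+y$ brings derivatives into play, and in any case nontriviality of $\Phi$ at the generic point does not by itself control the number of specializations at which it dies --- that passage is exactly the resultant-degree argument. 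Note also that $n=2$ genuinely fails (Proposition \ref{n2 proposition} exhibits full correlation), and in the paper the hypothesis $n>2$ enters through the leading coefficient $\pm n^na_n^{n-1}a_0^{n-1}$ of $\disc F_0$ being divisible by $8$, not through your translation-invariance argument for distinctness of the $\Delta_j$.
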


\section{Analogues in Characteristic 2}
The starting point in \cite{Carmon Rudnick} was Pellet's formula, expressing
the M\"obius function in terms of the quadratic character of the discriminant:
\begin{equation*}
 \mu(F) = (-1)^{\deg F}\chi_2(\disc F)
\end{equation*}
For even $q$, Pellet's formula does not hold; indeed, even the usual quadratic
character $\chi_2$ itself is meaningless, as every element of $\fq$ is the square
of another. There is, however, a similar formula, which utilises Berlekamp's
discriminant (first defined in \cite{Berlekamp}). We shall repeat here the definitions and required properties of Berlekamp's discriminant.

\subsection{Definition of Berlekamp's discriminant}
Given a polynomial \linebreak
$F(x)=a_nx^n+a_{n-1}x^{n-1}+\dots+a_0$, $a_n\neq 0$ with coefficients
in $\fq$, let $r_1,\dots, r_n$ be its roots in some algebraic extension of $\fq$. 
The Berlekamp discriminant of $F$ is defined in terms of its roots as
\begin{equation}\label{Berl formula}
\Berl(F) = \sum_{i<j} \frac{r_ir_j}{r_i^2+r_j^2}
\end{equation}
The expression $\Berl(F)$ is symmetric in the roots of $F$, hence it is in $\fq$
and its value is independent of the extension used. Furthermore, taking 
a common denominator, we may write
\begin{equation}\label{Berl formula ex}
\Berl(F) = \frac{a_n^{2n-2} \displaystyle\sum_{i<j} \big(r_ir_j \displaystyle\prod_
  {\substack{i'<j'\\(i',j')\neq(i,j)}} (r_{i'}^2+r_{j'}^2)\big)}{a_n^{2n-2} \displaystyle\prod_{i<j} (r_i^2+r_j^2)}
\end{equation}
Note that both the denominator and the numerator are symmetric polynomials in the roots of $F$. 
Hence, they are homogeneous polynomials\footnote{
It is perhaps not trivial that they are indeed polynomials, rather than rational functions with a power of $a_n$ in their denominators. 
We will see they are indeed polynomials as a by-product of their computation. Furthermore, in all of our applications, $F$ will be monic.
} (over $\mathbb{F}_2$) in the coefficients of $F$
of degree $2n-2$, and furthermore the denominator is in fact the discriminant of $F$, for in characteristic 2,
$r_i^2+r_j^2 = (r_i-r_j)^2$. Following Berlekamp, we denote the numerator of $\Berl(F)$ by $\xi(F)$, that is,
\begin{equation}
\Berl(F) = \frac{\xi(F)}{\disc F}
\end{equation}
Note also that in characteristic 2, $\disc F = \delt(F)^2$ where 
\begin{equation*}
\delt(F) =  a_n^{n-1}\prod_{i<j}(r_i+r_j) 
\end{equation*}
is a polynomial in the coefficients of $F$
with total degree $n-1$, and degree at most $d(n) = \floor{\frac{n-1}{2}}$ in $a_0$ -- its leading term,
as a polynomial in $a_0$, is $a_n^{\frac{n-1}{2}}a_0^{\frac{n-1}{2}}$ for odd $n$, and 
$a_{n-1}^{\frac{n}{2}}a_0^{\frac{n-2}{2}}$ for even $n$.
The formulae for Berlekamp's discriminant for degrees up to 3 are:
\begin{align*}
 \Berl(ax+b)&= \tfrac01 \\
 \Berl(ax^2+bx+c)&= \frac{ac}{b^2} \\
 \Berl(ax^3+bx^2+cx+d)&= \frac{a^2d^2+abcd+b^3d+ac^3}{(ad+bc)^2}
\end{align*}

\subsection{Effective computation of Berlekamp's discriminant}
The formulae above do not lend themselves immediately to computations of $\Berl(F)$ or $\xi(F)$ 
in terms of the coefficients of $F$.
A computational method can be obtained by first lifting the coefficients of $F$ from $\fq$ to a
field with characteristic 0. To do so, choose an algebraic extension $K$ of the rationals
that becomes isomorphic to $\fq$ when reduced modulo 2, and choose any lifting 
$F_0(x)=a_{0,n}x^n+a_{0,n-1}x^{n-1}+\dots+a_{0,0}$ with coefficients in $K$ such that
$F_0 \equiv F \pmod 2$. If the roots of $F_0$ in
the algebraic closure are $r_{0,1},\dots, r_{0,n}$ 
(such that $r_{0,i} \equiv r_i \pmod 2)$ we have
$\disc F_0 = a_{0,n}^{2n-2}\prod_{i<j}(r_{0,i}-r_{0,j})^2$. We define similarly
$\pisc F_0 = a_{0,n}^{2n-2}\prod_{i<j}(r_{0,i}+r_{0,j})^2$. Note that $\pisc F_0$ is a
symmetric polynomial in the roots of $F_0$, and thus is an integral
polynomial in the coefficients of $F_0$ -- in fact, it is the square of
such a polynomial. Furthermore, it is clear that
$\disc F_0 \equiv \pisc F_0 \pmod 4$, as polynomials. Therefore the expression 
$\xi_0(F_0) = \frac{\pisc F_0 - \disc F_0}{4}$ is also an integral polynomial 
in the coefficients of $F_0$. It is now easy to verify that when $\xi_0$ is reduced modulo 2,
the obtained polynomial must indeed be equal to $\xi(F)$, as given by the numerator of 
formula \eqref{Berl formula ex}. In particular, the result of this process is independent of the lifting.

In fact, in our computations we will use a simpler lifting.
We shall always compute the discriminants in general settings, where all of the coefficients
of $F$ are either 0, 1 or a symbol from a set of variables $V$, never any explicit value in $\fq$. Thus
we need only to lift the coefficients from $\FF_2[V]$ to $\Z[V]$, which 
can be done, for example, by lifting $0,1 \in \FF_2$ to $0,1 \in \Z$. 
The discriminants computed in this manner will surely yield the correct polynomials
in $\FF_2[V]$, and hence the same formulae will also be valid for any substitution of values 
from $\fq$ into the variables of $V$.

Finally, we note that $\pisc(F_0)$, like the discriminant, can be expressed as a resultant,
and is therefore easily computable:
\begin{equation}\label{pisc formula 1}
 \pisc(f(x)) = \frac{\rest(f(x),f(-x))}{2^n a_0 a_n} = \rest\left(\frac{f(x)-f(-x)}{2x},\frac{f(x)+f(-x)}{2}\right)
\end{equation}
Splitting $f$ into its even and odd part as $f(x) =  g(x^2)+xh(x^2)$, we may rewrite formula
 \eqref{pisc formula 1} as
\begin{equation}\label{pisc formula 2}
 \pisc(f(x)) = \rest(h(x^2),g(x^2)) = \rest(h(x),g(x))^2
\end{equation}
Using these methods, we show that for any $n>2$, the degree of
$\xi(F)$ in $a_0$ is at most $2d(n)$; this will be important later on. For all $n$, both $\disc(F_0),\pisc(F_0)$ have degree at most
$n-1$ in $a_0$. For odd $n$, $2d(n) = n-1$, and we are done. For even $n$, $2d(n) = n-2$, and so we must check
that the coefficients of $a_0^{n-1}$ in $\xi(F)$ vanish. And indeed, $\pisc(F_0)$ is a square, hence its 
degree in $a_0$ must be even, and therefore less than $n-1$. On the other hand, the leading coefficient of $\disc(F_0)$
is known to be $\pm n^na_n^{n-1}a_0^{n-1}$, which is clearly $0 \mod 8$ for any $n > 2$. Note that this is 
false for $n = 2$, and indeed we find that $\xi(F) = a_2a_0$ while $2d(2) = 0$.

\subsection{An analogue to Pellet's formula}
The main theorem in \cite{Berlekamp} provides an analogue to Pellet's formula 
in characteristic 2. We restate it here in more familiar terms.
Let $\chi_2 : \fq \to \{\pm 1\}$ be defined by $\chi_2(x) = 1$ iff
 $x = y^2 + y$ for some $y \in \fq$, and $\chi_2(x) = -1$ otherwise.
Note that the map $y \mapsto y^2 + y$ is linear over $\FF_2$,
and its kernel is the set $\{0,1\}$. Therefore its image is a $\FF_2$-linear subspace
of $\fq$ with codimension 1, and thus $\chi_2$ is a group homomorphism,
i.e. $\chi_2(x+y) = \chi_2(x)\chi_2(y)$. We are
interested in evaluating $\chi_2(\Berl F)$. From \eqref{Berl formula}
we may write $\Berl F  = \beta^2 + \beta$, where
 $\beta = \sum_{i<j} \frac{r_i}{r_i+r_j} \in \FF_{q^2}$, so we need only determine whether
$\beta \in \fq$. Note that any odd permutation of the roots $r_i$ changes
 $\beta$ to $\beta + 1$, and that $\beta$ is fixed under any even permutation.
As $\beta \in \fq$ iff $\beta$ is fixed under the Frobenius endomorphism,
the value of $\chi_2$ is determined by the sign of the permutation on $r_i$
given by the Frobenius endomorphism. The following analogue to Pellet's
 formula is now immediate:
\begin{equation}\label{Pellet2}
\mu(F) = (-1)^{\deg F} \chi_2(\Berl(F))
\end{equation}
Note that the formula is only true whenever $F$ is squarefree. Indeed,
otherwise $\disc F = 0$, in which case $\Berl(F)$ is not even properly defined.
Somewhat informally, we may correct this by assigning $\chi_2(\infty) = 0$. However, 
this difficulty is more easily avoided by assuming that $\disc (F+\alpha_i) \neq 0$ for all
$i$. Indeed, as there are exactly $q^{n-1}$ polynomials in $M_n$ with $\disc F = 0$,
this assumption fails for at most $rq^{n-1}$ $n$-tuples $a_0,\dots,a_{n-1}$. This negligible error will be
collected into the $\frac34(r+3)n^2q^{n-1}$ term further along.
We may henceforth assume that all $\epsilon_i = 1$, as terms with $\epsilon_i = 2$ do not affect the remaining
non-vanishing summands at all.

\section{Reduction to a counting problem}
Continuing analogously to \cite{Carmon Rudnick}, we may write
\begin{equation}
C(\alpha_1,\dots, \alpha_r;n) = (-1)^{nr} \sum_{F\in M_n}
\chi_2\big(\!\Berl(F+\alpha_1)+\dots
+\Berl(F+\alpha_r) \big)
\end{equation}
We single
out the constant term $t:= F(0)$ of $F\in M_n$ and write $F(x) = f
(x)+t$, with
\begin{equation}
f(x) = x^n+a_{n-1} x^{n-1}+\dots +a_1 x
\end{equation}
and set
\begin{equation}
B_f(t) := \Berl(f(x)+t) =  \frac{\xi(f(x)+t)}{\disc(f(x)+t)} = \frac{\xi_f(t)}{D_f(t)} = \frac{\xi_f(t)}{\delta_f^2(t)}
\end{equation}
which is a rational function of height\footnote{
The height $\hgt(p)$ of a rational function is the maximum of the degrees
of its numerator and denominator. It is equal to the total order of its poles (resp. zeros),
including poles (resp. zeros) at infinity.} at most $n-1$ in $t$. Therefore we have
\begin{equation}\label{charsum}
\left| C(\alpha_1,\dots, \alpha_r;n) \right|  \leq \sum_{a\in \fq^{n-1}}
\left|\sum_{t\in \fq} \chi_2\big( B_{f+\alpha_1}(t)+\dots
+ B_{f+\alpha_r}(t)\big) \right|
\end{equation}

In order to bound the character sum, we apply Weil's theorem to the appropriate Artin-Schreier curve. 
See \cite[Theorem 1]{Perelmuter} for the general claim and proof; we state it here for characteristic 2.
\begin{thm}\label{weil 2}
 Let $\fq$ be a field of characteristic 2, and let $p \in \fq(t)$ 
 be a rational function which is not of the form $H^2(t) + H(t) + c$ for any
 $H(t) \in \fq(t)$, $c \in \fq$.
 Starting with the projective curve $y^2 + y = p(t)$, using translations 
 of the variable $y$ by appropriate rational functions in $\fq(t)$,
 we may obtain an isomorphic curve $y^2 + y = \tilde p(t)$, satisfying:
 \begin{enumerate}
  \item $\tilde p(t) = p(t) + Q^2(t) + Q(t)$ for some rational function $Q(t) \in \fq(t)$ with $\hgt(Q) \le \tfrac12 \hgt(p)$
  \item The poles $P_0,\dots,P_{s}$ of $\tilde p$ are all poles of $p$
  \item The order $d_i$ of the pole $P_i$ in $\tilde p$ is less than or equal to its order in $p$
  \item The orders $d_i$ are all odd.
 \end{enumerate}
 The following bound then holds:
 \begin{equation}\label{weil-as}
  \left| \sum_{t\in \fq}\chi_2(\tilde p(t)) \right| \leq 2gq^{1/2} + 1
 \end{equation}
 Where g is the genus of the (isomorphic) curves, given by
 \begin{equation}\label{genus formula}
  g = \frac{\sum_{i=0}^s (d_i+1) - 2}{2}
 \end{equation}
\end{thm}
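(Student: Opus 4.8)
The plan is to carry out the classical normalization of the Artin--Schreier cover $y^2+y=p(t)$ of the projective $t$-line, track heights along the way, compute the genus by Riemann--Hurwitz, and then quote the Weil bound for the resulting smooth curve. For the normalization I would proceed pole by pole. Suppose the current right-hand side has a pole $P$ of even order $2m$ whose leading Laurent coefficient (in a local parameter $\pi$ at $P$) is $c\neq 0$; since $\fq$ is perfect there is a unique square root $\sqrt c\in\fq$, and replacing $y$ by $y+R$, where $R$ is a rational function whose principal part at $P$ is $\sqrt c\,\pi^{-m}$, changes the right-hand side to $p+R^2+R$. Because the Frobenius is additive in characteristic $2$, $R^2$ contributes \emph{only even-order} terms at $P$, so its order-$2m$ term is exactly $c\,\pi^{-2m}$ and cancels the leading term of $p$; the pole order at $P$ strictly decreases, no new pole is created (the poles of $R$ lie among those of $p$), and poles elsewhere are untouched. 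Iterating over all poles and all successive even orders terminates after finitely many steps; summing all the corrections $R$ into a single $Q$ and using again that squaring is additive gives $\tilde p=p+Q^2+Q$, with all pole orders of $\tilde p$ odd, none exceeding the corresponding order of $p$, and the poles of $\tilde p$ contained in those of $p$ --- this is (2)--(4). For (1), at each pole $P_i$ of order $d_i$ the pole of $Q$ has order at most $\floor{d_i/2}$, so $\hgt(Q)=\sum_i\ord_{P_i}Q\le\sum_i\floor{d_i/2}\le\tfrac12\sum_i d_i=\tfrac12\hgt(p)$. One subtlety to address is that everything must stay over $\fq$: I would run the reduction on the partial-fraction decomposition of $p$ over $\fq$ (equivalently, invoke the standard Artin--Schreier normalization inside the function field $\fq(t)$), so that each Galois orbit of poles is handled at once and $Q\in\fq(t)$.

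With $\tilde p$ in this standard form, the cover $C\colon y^2+y=\tilde p(t)$ over $\mathbb{P}^1_t$ is geometrically irreducible precisely because $p$, and hence $\tilde p$, is not of the form $H^2+H+c$ (were $\tilde p$ constant, $p$ would be of that form); in particular $\tilde p$ is nonconstant and so has at least one pole, so $g$ as defined by \eqref{genus formula} is a nonnegative integer. The genus itself comes from Riemann--Hurwitz for the degree-$2$ map $C\to\mathbb{P}^1$: the cover is (wildly) ramified exactly over $P_0,\dots,P_s$, the different exponent at the place of $C$ above $P_i$ equals $d_i+1$ --- the standard local computation for an Artin--Schreier pole of order $d_i$ coprime to the characteristic --- and there is no other ramification, so $2g-2=2(2\cdot 0-2)+\sum_{i=0}^s(d_i+1)$, which rearranges to \eqref{genus formula}.

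Finally I would deduce \eqref{weil-as} from the Riemann hypothesis for the curve $C$. For every $t\in\fq$ that is not a pole of $\tilde p$ the fibre over $t$ contains exactly $1+\chi_2(\tilde p(t))$ affine $\fq$-points, so $\sum_{t\in\fq}\chi_2(\tilde p(t))$ equals $\#C(\fq)-(q+1)$ up to a bounded correction accounting for the finitely many poles of $\tilde p$ and the points of $C$ over $t=\infty$; the Hasse--Weil estimate $\bigl|\#C(\fq)-(q+1)\bigr|\le 2g\sqrt q$ together with a careful accounting of that $O(1)$ discrepancy yields the stated clean bound $2gq^{1/2}+1$. This last step is exactly the specialization to characteristic $2$ of \cite[Theorem 1]{Perelmuter}, which I would simply cite.

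The main obstacle I anticipate is the bookkeeping in the first paragraph: checking that the iterative cancellation genuinely terminates with all pole orders odd, that it never manufactures a spurious pole, that the inequality $\ord_{P_i}Q\le\floor{d_i/2}$ --- and hence the height bound (1) --- survives the accumulation of all the corrections and does not merely hold locally at a single step, and that the order at $P_i$ recorded in (3) is respected throughout. All of these hinge on the same characteristic-$2$ fact, that a square has poles only of even order, which is what lets one peel off the even part of each pole a factor of two at a time without disturbing the odd part.
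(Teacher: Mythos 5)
Your sketch is correct and is exactly the standard argument: peel off even-order principal parts via $y\mapsto y+R$ using that squaring doubles pole orders in characteristic $2$, compute the genus by Riemann--Hurwitz with different exponent $d_i+1$ at each (odd-order, wildly ramified) pole, and conclude with the Hasse--Weil bound. The paper itself gives no proof of this theorem --- it simply restates \cite[Theorem 1]{Perelmuter} in the characteristic-$2$ setting --- and your proposal is precisely the content of that reference, so there is nothing to compare beyond noting the agreement.
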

Note that the condition $p \neq H^2(t) + H(t) + c$ was necessary (and sufficient) in
order to ensure that $\tilde p$ is not a constant function. 

In our case, we want a bound for the character sum of $p$, not of $\tilde p$.
Note that $\chi_2(Q^2(t)+Q(t)) = 1$ whenever $t$ is not a pole of $Q$, and equals $0$ at poles.
Therefore $\chi_2(\tilde p(t))$ and $\chi_2(p(t))$ may differ only at the poles of $Q$, 
and if they do, they differ by at most 1. Thus
\begin{equation}\label{pptilde}
 \left| \sum_{t\in \fq}\chi_2(p(t)) \right| \le \left| \sum_{t\in \fq}\chi_2(\tilde p(t)) \right| + \hgt(Q(t))
\end{equation}

We will use the following easy corollary of \eqref{genus formula} to estimate the genus:
\begin{cor}\label{genus bound}
 Let $\fq$ be a field of characteristic 2, and let $p \in \fq(t)$
 be a rational function not of the form $H^2(t) + H(t) + c$. 
 Suppose that the order of $p$ in all of its poles is even, except in at most one pole.
 Then $g \le \frac{\hgt(p)-1}{2}$, where $g$ is the genus of the curve $y^2 + y = p(t)$.
\end{cor}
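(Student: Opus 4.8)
The plan is to apply Theorem \ref{weil 2} to $p$ and bound the genus of the isomorphic curve $y^2+y=\tilde p(t)$ using the genus formula \eqref{genus formula}, keeping careful track of how the pole data changes. First I would invoke parts (2), (3), (4) of Theorem \ref{weil 2}: the poles $P_0,\dots,P_s$ of $\tilde p$ form a subset of the poles of $p$, their orders $d_i$ in $\tilde p$ are odd, and $d_i$ is at most the order of $P_i$ in $p$. By hypothesis, at most one pole of $p$ has odd order; at every other pole the order is even. Since each $d_i$ must be \emph{odd} while being bounded above by the order in $p$, at any pole where $p$ has even order $2m$ we get $d_i \le 2m-1$, and at the (at most one) pole where $p$ has odd order $2m+1$ we get $d_i \le 2m+1$.

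Next I would feed these bounds into \eqref{genus formula}. Write $e_i$ for the order of $P_i$ as a pole of $p$ (so $e_i \ge d_i$, and $\sum_i e_i \le \hgt(p)$ since the height equals the total order of the poles, including at infinity). Then
\begin{equation*}
2g + 2 = \sum_{i=0}^s (d_i + 1).
\end{equation*}
For each $i$ with $e_i$ even we have $d_i+1 \le e_i$, and for the at-most-one index $i_0$ with $e_{i_0}$ odd we have $d_{i_0}+1 \le e_{i_0}+1$. Summing,
\begin{equation*}
2g+2 = \sum_{i} (d_i+1) \le \Big(\sum_i e_i\Big) + 1 \le \hgt(p) + 1,
\end{equation*}
whence $g \le \frac{\hgt(p)-1}{2}$, as claimed. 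If in fact all poles of $p$ have even order, the same computation gives $2g+2 \le \hgt(p)$, which is even stronger; but the stated bound covers both cases uniformly.

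I do not anticipate a serious obstacle here: the only subtlety is bookkeeping — making sure the "$+1$" from the single odd-order pole is accounted for exactly once, and recalling from the footnote that $\hgt(p)$ equals the total pole order (counting the pole at infinity), so that $\sum_i e_i \le \hgt(p)$ even when $\tilde p$ has dropped some poles of $p$ entirely. One should also note in passing that the hypothesis $p \ne H^2+H+c$ is what guarantees via Theorem \ref{weil 2} that $\tilde p$ is nonconstant, so that the curve $y^2+y=\tilde p(t)$ genuinely has a well-defined genus given by \eqref{genus formula}; this is why it is included among the hypotheses.
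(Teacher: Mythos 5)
Your proof is correct: the paper states this as an ``easy corollary'' of \eqref{genus formula} and gives no explicit argument, and your bookkeeping --- using parts (2)--(4) of Theorem \ref{weil 2} to get $d_i+1\le e_i$ at each even-order pole and $d_{i_0}+1\le e_{i_0}+1$ at the single possible odd-order pole, then summing against $\hgt(p)=\sum_i e_i$ --- is exactly the intended derivation. No gaps.
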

For us, the relevant rational function is $p(t) = B_{f+\alpha_1}(t)+\dots
+B_{f+\alpha_r}(t)$, which has height at most $r(n-1)$.
The denominator of $p$ is $(\delta_{f+\alpha_1}\dots\delta_{f+\alpha_r})^2$,
which indeed shows that all its poles have even order, 
with the sole possible exception of a pole at infinity.
Hence $p$ satisfies the conditions of Corollary \ref{genus bound}, 
provided $p(t) \neq H^2(t) + H(t) + c$.
Combining \eqref{weil-as}, \eqref{pptilde} and Corollary \ref{genus bound}, we obtain
\begin{equation}
\left| \sum_{t\in \fq}\chi_2(p(t)) \right| \le 2gq^{1/2} + 1 + \hgt(Q) < \hgt(p)q^{1/2} + \tfrac{\hgt(p)}2 + 1 < rnq^{1/2}
\end{equation}
which, when applied to \eqref{charsum}, provides the major term in \eqref{main thm eq}. 

We need now only find a way to bound the size of the set $G_n^c$ of ``bad'' $a$'s where 
$p(t) = H^2(t) + H(t) + c$. As in odd characteristic, we cover $G_n^c$ by
simpler, algebraic varieties. 

\subsection{Covering $G_n^c$}
Suppose WLOG that $\alpha_1 = 0$.
\begin{prop}\label{main prop}
We can write $G^c_n\subset A_n\cup B_n\cup C_n$ where:
\begin{itemize}
\item
$A_n$ is the set of those $a\in \fq^{n-1}$ for which $\deg \delta_f = d(n)$ and $\delta_f$ is not coprime to $\xi_f \delta'^2_f - \xi'^2_f$, that is
\begin{equation}
A_{n}  = \{a\in \fq^{n-1}: \deg \delta_f = d(n), \rest(\delta_f,\xi_f \delta'^2_f - \xi'^2_f) =0 \}
\end{equation}

\item
$B_n=\cup_{j\neq 1} B(j)$ where $B(j)$ is the set of those $a\in \fq^{n-1}$ for which $\deg \delta_f = \deg \delta_{f+\alpha_j} = d(n)$ and
  $\delta_f(t)$ and $\delta_{f+\alpha_j}(t)$ have a common zero, that is
\begin{equation}
B(j) = \{a\in \fq^{n-1}: \deg \delta_f = \deg \delta_{f+\alpha_j} = d(n), \rest(\delta_f(t), \delta_{f+\alpha_j}(t)
) = 0 \}
\end{equation}
\item
$C_n = \cup_{j} C(j)$ where %$C(j)$ is the set of those $a\in \fq^{n-1}$ for which 
%$\deg \delta_{f+\alpha_j} < d(n)$, that is
\begin{equation}
C(j)  = \{a\in \fq^{n-1}: \deg \delta_{f+\alpha_j} < d(n)  \}
\end{equation}
\end{itemize}
\end{prop}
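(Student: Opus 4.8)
The plan is to show that if $a \notin A_n \cup B_n \cup C_n$, then $p(t) = B_{f+\alpha_1}(t) + \dots + B_{f+\alpha_r}(t)$ is \emph{not} of the form $H^2(t) + H(t) + c$; equivalently, $a \in G_n^c$ forces membership in at least one of the three sets. First I would recall that a rational function $\rho(t)$ over a field of characteristic $2$ is of the form $H^2 + H + c$ exactly when, at every pole, the principal part has only even-order terms and moreover the odd-order parts can be ``cleared'' — more precisely, writing $\rho$ in terms of its poles, being an Artin–Schreier coboundary (up to a constant) means every pole $P$ has a local expansion whose only obstruction lies in the coefficients of $(t-P)^{-m}$ for odd $m$, and all of those must vanish. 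Concretely, if $\rho$ has a pole of \emph{odd} order, or a simple-odd obstruction at some pole, it cannot be written as $H^2 + H + c$. So the strategy is: identify a pole of $p$ and show that $p$ has an odd-order pole there, unless one of the degree/coprimality conditions defining $A_n$, $B_n$, $C_n$ is met.

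The key computation is local analysis of a single summand $B_{f+\alpha_j}(t) = \xi_{f+\alpha_j}(t) / \delta_{f+\alpha_j}(t)^2$ at a zero $t_0$ of $\delta_{f+\alpha_j}$. If $t_0$ is a simple zero of $\delta_{f+\alpha_j}$ and does not divide $\xi_{f+\alpha_j}$, then $B_{f+\alpha_j}$ has a pole of order exactly $2$ there, and the coefficient of $(t-t_0)^{-1}$ in its principal part is $\xi_{f+\alpha_j}(t_0) \cdot (\text{something})/\delta'_{f+\alpha_j}(t_0)$ — this is where the resultant $\rest(\delta_f, \xi_f\delta_f'^2 - \xi_f'^2)$ enters: that resultant vanishes precisely when some zero of $\delta_f$ is either a multiple zero, or a zero of $\xi_f$, or a zero at which the order-one part of the principal part of $B_f$ vanishes. (One should double-check the exact shape of the combination $\xi_f\delta_f'^2 - \xi_f'^2$ by expanding $B_f = \xi_f/\delta_f^2$ near a simple zero $t_0$ of $\delta_f$: modulo squares, $B_f \equiv \xi_f(t_0)/\big(\delta_f'(t_0)^2 (t-t_0)^2\big) + (\text{odd part})$, and the odd-order obstruction is governed exactly by whether $t_0 \mid \big(\xi_f \delta_f'^2 - \xi_f'^2\big)$.) Thus, away from $A_n$: either $\deg\delta_f < d(n)$ (then $a \in C(1) \subseteq C_n$), or $\deg\delta_f = d(n)$ and every simple zero of $\delta_f$ genuinely contributes an odd (order-$2$ with nonzero odd obstruction... wait) — rather, contributes a pole that the competing summands must cancel.

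Now the role of $B_n$: even if $B_f = B_{f+\alpha_1}$ has a genuine obstruction at $t_0$, the total $p$ could still be a coboundary if another summand $B_{f+\alpha_j}$ has a matching pole at the same $t_0$ that cancels it. This happens only if $t_0$ is also a zero of $\delta_{f+\alpha_j}$, i.e. $\rest(\delta_f,\delta_{f+\alpha_j}) = 0$, i.e. $a \in B(j)$ — provided $\deg\delta_{f+\alpha_j} = d(n)$ too, the remaining case $\deg \delta_{f+\alpha_j} < d(n)$ being absorbed into $C(j) \subseteq C_n$. So: if $a \notin A_n \cup B_n \cup C_n$, then $\deg \delta_f = d(n)$, every zero of $\delta_f$ is simple, not a zero of $\xi_f$, and has nonvanishing odd obstruction, and no other $\delta_{f+\alpha_j}$ shares any of these zeros; hence the principal part of $p$ at such a $t_0$ has a nonzero obstruction of odd type that cannot be cancelled, so $p \neq H^2 + H + c$, i.e. $a \notin G_n^c$. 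Contrapositive gives $G_n^c \subseteq A_n \cup B_n \cup C_n$.

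The main obstacle is pinning down the precise local criterion — \emph{which} coefficient in the principal part of $B_f$ near a simple zero $t_0$ of $\delta_f$ is the genuine Artin–Schreier obstruction, and verifying that it is a unit multiple of the value $\big(\xi_f \delta_f'^2 - \xi_f'^2\big)(t_0)$ — together with keeping track of the behavior at the pole at infinity (whose order is controlled by $\hgt(B_f) \le n-1$ and the degree bounds on $\xi_f$ and $\delta_f$ recorded earlier, ensuring infinity is never the ``extra'' odd pole in a way that would let $p$ be a coboundary). Everything else is bookkeeping: translating ``common zero'' into the vanishing of the stated resultants, and checking that the degenerate cases $\deg \delta_{f+\alpha_j} < d(n)$ are exactly $C_n$.
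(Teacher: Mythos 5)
Your overall strategy coincides with the paper's: pass to the contrapositive, use $a\notin B_n\cup C_n$ to isolate the poles of $p$ at the zeros of $\delta_f$ from those of the other summands, and detect the obstruction there via the polynomial $\Xi_f=\xi_f\delta_f'^2-\xi_f'^2$. The paper, however, argues globally rather than locally: from $p=H^2+H+c$ it extracts the partial-fraction component $H_D/\delta_f$ of $H$ supported at the roots of $\delta_f$ (with $\deg H_D<\deg\delta_f$), obtains the polynomial identity $\xi_f=H_D^2+\delta_f H_D+c_2\delta_f^2$, differentiates, and reduces modulo $\delta_f$ to get $\delta_f\mid\Xi_f$. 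Your local computation at a \emph{simple} zero $t_0$ of $\delta_f$ is the localization of exactly this identity, and it does check out: the pole of $B_f$ there is $\xi_f(t_0)\delta_f'(t_0)^{-2}(t-t_0)^{-2}+\xi_f'(t_0)\delta_f'(t_0)^{-2}(t-t_0)^{-1}+O(1)$, and the residual simple-pole coefficient after subtracting the obvious $\wp$-coboundary vanishes iff $\Xi_f(t_0)=0$. So at simple zeros your argument is sound.

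The genuine gap is the multiple-zero case. Your parenthetical claim that $\rest(\delta_f,\Xi_f)$ vanishes whenever $\delta_f$ has a multiple zero is false: at such a zero $\delta_f'(t_0)=0$, so $\Xi_f(t_0)=\xi_f'(t_0)^2$, which need not vanish. The paper's own example $f=x^n+ax^2+t$ ($n$ odd) has $\delta_f=t^{(n-1)/2}$, a single root of multiplicity $(n-1)/2$, with $\Xi_f(0)=a^{2n}\neq 0$; so for $a\notin A_n$ you may well face a multiple zero of $\delta_f$, and your order-$2$ expansion simply does not apply there. The case can be repaired: at a zero of multiplicity $m\ge 2$, $\Xi_f(t_0)\neq 0$ forces $\xi_f'(t_0)\neq 0$, hence (since $1/u^2$ has only even-order terms in its expansion, $u$ being the unit part of $\delta_f$ at $t_0$) the coefficient of $(t-t_0)^{-(2m-1)}$ in $B_f$ is nonzero; an odd-order term of order exceeding $m$ cannot be produced by $\wp(g)$ for any $g$ with pole order $\le m$, so the obstruction persists. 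But this is an additional argument you did not supply, and it is precisely the case division that the paper's congruence-mod-$\delta_f$ proof avoids by never expanding at individual roots. (Two minor points: the ``exactly when'' characterization of $H^2+H+c$ is also misstated, though you only use the correct necessary direction; and the worry about the pole at infinity is moot, since you only need one finite pole with a non-removable obstruction.)
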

Henceforth, let us denote $\Xi_f = \xi_f \delta'^2_f - \xi'^2_f$.
\begin{proof}

We will assume $a \in G^c_n \setminus (B_n \cup C_n)$, and show that $a \in A_n$. By $a \not\in B(j)\cup C(j)\cup C(1)$,
$\delta_f$ is coprime to $\delta_{f+\alpha_j}$ for all $j\neq 1$. From $a \in G^c_n$ we obtain
\begin{equation}\label{bad polynomial}p(t) = 
\frac{\xi_{f+\alpha_1}(t)}{D_{f+\alpha_1}(t)} + \dots + \frac{\xi_{f+\alpha_r}(t)}{D_{f+\alpha_r}(t)} =
H^2(t) + H(t) + c.
\end{equation}
Note that from $a \not\in C(1)$ we have $\deg D_f = 2d(n) \ge \deg \xi_f$.
Consider a root of $\delta_{f}$ with multiplicity $m$. Then it is a pole of $p$ with multiplicity at most $2m$. Hence
it is a pole of $H$ with multiplicity at most $m$, its multiplicity in $\delta_f$.
As this is true for every root of $\delta_{f}$, it follows that we may write
$H = \frac{H_1}{\delta_{f}H_d}$ where $H_1, H_d$ are polynomials and $H_d$ is coprime to $\delta_f$.
Thus there exists a unique polynomial $H_D$ with $\deg H_D < \deg \delta_f$ such that
$H_D \equiv \frac{H_1}{H_d} \pmod {\delta_f}$. We may then write $H_1 = H_DH_d+H_2\delta_f$ for 
some polynomial $H_2$, or equivalently, $H = \frac{H_D}{\delta_f} + \frac{H_2}{H_d}$. Substituting this
relation in \eqref{bad polynomial}, we obtain from $\deg H_D < \deg \delta_f$ and $\deg \xi_f \leq \deg D_f$,
as well as $\delta_f$ being coprime to $D_{f+\alpha_j}$ and $H_d$, that 
\begin{equation}
\frac{\xi_f(t)}{\delta_f^2(t)} = \left(\!\frac{H_D}{\delta_f}\!\right)^2 + \frac{H_D}{\delta_f} + c_2.
\end{equation}
Multiplying by $\delta_f^2$, we obtain
\begin{equation}\label{HD eq 1}\xi_f = H_D^2 + \delta_f H_D + c_2\delta_f^2\end{equation}
Differentiating the last formula, we get
\begin{equation}\label{HD eq 2}\xi'_f = \delta'_f H_D + \delta_f H'_D\end{equation}
Reducing equations \eqref{HD eq 1} and \eqref{HD eq 2} modulo $\delta_f$, we find
\begin{align}
 \xi_f \equiv H_D^2 \pmod {\delta_f} \\
 \xi'_f \equiv \delta'_fH_D \pmod {\delta_f}
\end{align}
From which we easily derive 
\begin{equation}\label{modular condition}
 \xi'^2_f \equiv \delta'^2_fH^2_D \equiv \delta'^2_f\xi_f \pmod {\delta_f}
\end{equation}
Congruence \eqref{modular condition} states that
$\delta_f$ must divide $\Xi_f = \xi_f \delta'^2_f - \xi'^2_f$.
 $a \not \in C(1)$ implies in particular that $\delta_f$ is not constant, and therefore $\delta_f$ and $\Xi_f$ are not coprime -- 
thus $a \in A_n$ by definition. 
\end{proof}
\subsection{Bounding degrees and sizes}
In order to complete the proof we need to provide bounds for the
degrees of the polynomials defining $A_n$ and $B_n$, and to show that these
polynomials are not identically zero. We must also bound the size of $C_n$.
We shall first obtain the bound on the degrees and the sizes of $A_n$, $B_n$,
assuming the relevant polynomials do not vanish.
We begin with the following lemma.
\begin{lem}\label{deg lem}
 Let $A = A' \cup \{t\}$ be a set of variables. Let $f,g \in \fq[A]$ be homogeneous polynomials in 
 the variables $A$ of degrees $d_f, d_g$ respectively. Let $n_f, n_g$ be their respective degrees as
 polynomials in the variable $t$ with coefficients in $\fq[A']$. Set $R = \rest_t(f,g) \in \fq[A']$.
 Then $R$ is a homogeneous polynomial in the variables $A'$ of degree $ d_fn_g + n_fd_g - n_fn_g $. 
\end{lem}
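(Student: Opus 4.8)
The plan is to read the homogeneity off directly from the Sylvester-matrix definition of the resultant. Write $f=\sum_{i=0}^{n_f}p_it^i$ and $g=\sum_{j=0}^{n_g}q_jt^j$ with coefficients $p_i,q_j\in\fq[A']$, so that $R=\rest_t(f,g)=\det S$, where $S$ is the $(n_f+n_g)\times(n_f+n_g)$ Sylvester matrix whose first $n_g$ rows are the coefficient vectors of $t^{n_g-1}f,\dots,tf,f$ and whose last $n_f$ rows are the coefficient vectors of $t^{n_f-1}g,\dots,tg,g$, all expressed in the monomial basis $t^{n_f+n_g-1},\dots,t,1$. Indexing columns by $c=1,\dots,n_f+n_g$ so that column $c$ records the coefficient of $t^{n_f+n_g-c}$, a direct check shows that the $(k,c)$ entry of $S$ equals $p_{n_f+k-c}$ when $1\le k\le n_g$ and $q_{k-c}$ when $n_g<k\le n_f+n_g$, with the convention that $p_m$ and $q_m$ vanish for $m$ outside their ranges.

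Next I would record the homogeneity of the individual entries. Since $f$ is homogeneous of total degree $d_f$ in $A=A'\cup\{t\}$, its coefficient $p_i$ of $t^i$ is homogeneous of degree $d_f-i$ in the variables $A'$ (here $d_f-i\ge d_f-n_f\ge0$, and $p_i=0$ once $i>d_f$); likewise $q_j$ is homogeneous of degree $d_g-j$ in $A'$. Hence the $(k,c)$ entry of $S$, whenever it is nonzero, is homogeneous in $A'$ of degree $d_f-n_f-k+c$ for $1\le k\le n_g$ and of degree $d_g-k+c$ for $n_g<k\le n_f+n_g$. I would then expand $R=\det S=\sum_{\tau}\operatorname{sgn}(\tau)\prod_{k=1}^{n_f+n_g}S_{k,\tau(k)}$ and sum the entry-degrees of a single term (putting $c=\tau(k)$), obtaining
\[
\sum_{k=1}^{n_g}\bigl(d_f-n_f-k+\tau(k)\bigr)+\sum_{k=n_g+1}^{n_f+n_g}\bigl(d_g-k+\tau(k)\bigr)=n_g(d_f-n_f)+n_fd_g+\sum_{k=1}^{n_f+n_g}\bigl(\tau(k)-k\bigr),
\]
and the last sum is $0$ because $\tau$ permutes $\{1,\dots,n_f+n_g\}$. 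Thus every monomial occurring in $R$ is homogeneous of the single degree $d_fn_g+n_fd_g-n_fn_g$, so $R$ is homogeneous of that degree --- under the usual convention that the zero polynomial is homogeneous of every degree, which takes care of the degenerate case $R\equiv0$.

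There is no substantive obstacle here: the lemma is pure bookkeeping. The one point that needs a little care is setting up the Sylvester-matrix indexing so that the $A'$-degree of the $(k,\tau(k))$ entry is an affine function of $\tau(k)-k$; once that is arranged, the permutation sum telescopes to the claimed constant. One could instead quote the classical isobaric property of resultants, or deduce the claim by substituting $a\mapsto\lambda a$ for every $a\in A'$ and tracking the resulting power of $\lambda$ through the scaling identity $\rest_t\bigl(f(\lambda t),g(\lambda t)\bigr)=\lambda^{n_fn_g}\rest_t(f,g)$; but the determinant expansion above is the shortest self-contained route.
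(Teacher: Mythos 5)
Your proof is correct and follows essentially the same route as the paper: both arguments compute the $A'$-degree of each monomial of $\rest_t(f,g)$ from the homogeneity of the $t$-coefficients of $f$ and $g$ together with the weight (isobaric) properties of the resultant, arriving at $d_fn_g+n_fd_g-n_fn_g$. The only difference is that the paper cites the identities $\sum r_k = n_g$, $\sum s_k = n_f$, $\sum k(r_k+s_k)=n_fn_g$ as well-known properties of the resultant, whereas you rederive them from the Sylvester determinant expansion, which makes your version self-contained but substantively identical.
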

\begin{proof}
 Write $f = \sum_{k=0}^{n_f} a_k t^k, g = \sum_{k=0}^{n_g} b_k t^k$, where $a_k, b_k$ are homogeneous polynomials
 in the variables $A'$ of degrees $d_f - k, d_g - k$, respectively. Let $\prod a_k^{r_k} \prod b_k^{s_k}$ be any
 arbitrary monomial appearing in $R$. By well known properties of the resultant, we have
 %$$\sum_{k=0}^{n_f} {r_k} = n_g,\quad \sum_{k=0}^{n_g} {s_k} = n_f, \quad
 %\sum_{k=0}^{n_f} {k\ r_k} +\sum_{k=0}^{n_g} {k\ s_k} = n_f n_g $$
 $$\sum {r_k} = n_g,\quad \sum {s_k} = n_f, \quad
 \sum {k(r_k+s_k)} = n_f n_g. $$
 It follows that the total degree of the monomial in the variables $A'$ is
%  \begin{align*}
%   & \sum_{k=0}^{n_f} {(d_f - k)r_k} +\sum_{k=0}^{n_g} {(d_g-k)s_k} =  \\
%   & d_f \sum_{k=0}^{n_f} {r_k} + d_g \sum_{k=0}^{n_g} {s_k} - 
%   \left(\sum_{k=0}^{n_f} {k\ r_k} +\sum_{k=0}^{n_g} {k\ s_k}\right) =  
%   n_fd_g + d_gn_f - n_fn_g 
%  \end{align*}
 \begin{align*}
  & \sum{(d_f - k)r_k} +\sum {(d_g-k)s_k} =  \\
  & d_f \sum {r_k} + d_g \sum {s_k} - \sum {k(r_k+s_k)} =  
  d_fn_g + n_fd_g - n_fn_g 
 \end{align*}
 as claimed.
\end{proof}

In order to obtain bounds on the sizes of $A_n, B_n$ from the degrees of their defining polynomials,
we will use of the following elementary lemma (\cite[\S 4, Lemma 3.1]{Schmidt}):
\begin{lem}
Let $h(X_1,\dots, X_m) \in \fq[X_1,\dots, X_m]$ be a non-zero
polynomial of total degree at most $d$. Then the number of zeros  of
$h(X_1,\dots, X_m)$ in $\fq^m$ is at most
\begin{equation}
\#\{x\in \fq^m: h(x)=0\} \leq d q^{m-1}.
\end{equation}
\end{lem}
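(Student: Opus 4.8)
The plan is to argue by induction on the number of variables $m$. The base case $m=1$ is just the statement that a nonzero univariate polynomial of degree at most $d$ has at most $d$ roots in $\fq$ (indeed in any field), and here $d = dq^{0}$, so the asserted bound holds.

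For the inductive step I would take $m \ge 2$, assume the bound in $m-1$ variables, and regard $h$ as a polynomial in $X_m$ with coefficients in $\fq[X_1,\dots,X_{m-1}]$, say
\[
h = \sum_{i=0}^{k} h_i(X_1,\dots,X_{m-1})\,X_m^i, \qquad k = \deg_{X_m} h \le d,\ \ h_k \not\equiv 0,
\]
so that the leading coefficient $h_k$ is a nonzero polynomial of total degree at most $d-k$ in the variables $X_1,\dots,X_{m-1}$. I would then split the zeros of $h$ in $\fq^m$ according to the value of $h_k$ on the first $m-1$ coordinates. Over a point $x'\in\fq^{m-1}$ with $h_k(x')=0$ there are trivially at most $q$ values of $X_m$ completing $x'$ to a zero of $h$, and by the inductive hypothesis there are at most $(d-k)q^{m-2}$ such $x'$, contributing at most $(d-k)q^{m-1}$ zeros. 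Over a point $x'$ with $h_k(x')\ne 0$ the specialization $h(x',X_m)$ is a nonzero polynomial in $X_m$ of degree exactly $k$ (a nonzero constant when $k=0$), hence has at most $k$ roots, and there are at most $q^{m-1}$ such $x'$, contributing at most $kq^{m-1}$ zeros. Adding the two contributions gives $(d-k)q^{m-1} + kq^{m-1} = dq^{m-1}$, which closes the induction.

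I do not expect a genuine obstacle here --- this is a classical estimate of Schwartz--Zippel type. The only point requiring (minimal) care is the bookkeeping in the case split: one must choose the leading coefficient in the distinguished variable $X_m$ so that its total degree drops by exactly $k$, and then check that the degenerate sub-cases ($k=0$, or $h_k$ a nonzero constant with no zeros) are subsumed by the very same inequalities, so that the two pieces recombine to precisely $dq^{m-1}$.
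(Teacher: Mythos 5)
Your proof is correct. The paper does not actually prove this lemma --- it is quoted directly from Schmidt's book (\S 4, Lemma 3.1 there) --- and your induction on the number of variables, splitting the fibre over $x'\in\fq^{m-1}$ according to whether the leading coefficient $h_k$ (of total degree at most $d-k$) vanishes, is precisely the standard argument for this estimate; the bookkeeping $(d-k)q^{m-1}+kq^{m-1}=dq^{m-1}$ and the degenerate cases are all handled correctly.
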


% Note that, by their definitions, $n_f \le d_f, n_g \le d_g$. It is then clear that 
% $d_R = d_fn_g + n_fd_g - n_fn_g = d_fd_g - (d_f-n_f)(d_g-n_g)$
% does not decrease when any one of $n_f,n_g,d_f,d_g$ is increased, as long as $n_f \le d_f, n_g \le d_g$
% is maintained. We thus obtain the following corollary: 
% \begin{cor}\label{deg cor}
%  Let $A,A',t,f, g, R$ be defined as in Lemma \ref{deg lem}, but suppose now that $f,g$ are homogeneous
%  in $A$ of degrees {\em at most} $d_f, d_g$, and similarly their degrees in $t$ are at most
%  $n_f, n_g$. Suppose also that $n_f \le d_f, n_g \le d_g$. Then $R$ is homogeneous
%  in $A$ of degree at most $n_fd_g + d_gn_f - n_fn_g$. \qed
% \end{cor}

\subsection{Bounding $C_n$}
We note that for odd $n > 2$, $\delta_{f+\alpha_j}(t)$ is always of degree exactly $(n-1)/2$.
For even $n > 2$, $\deg \delta_{f+\alpha_j}(t) = (n-2)/2$
iff the coefficient of $x^{n-1}$ in $f+\alpha_j$ is non-zero. This is true simultaneously for every $j$
for all but at most $rq^{n-2}$ tuples $a$ where $a_{n-1} \in \{\alpha_{j,n-1}\}$.  Hence $\#C_n < rq^{n-2}$. This contribution will be merged into the $B_n$ bound.
\subsection{Bounding $B_n$}
$\delta_f(t), \delt_{f+\alpha_j}(t)$ have total degree $n-1$ in $a_{n-1},\dots,a_1,t$, and by definition of $B(j)$,
they have degree $d(n)$ as polynomials in $t$. Hence by Lemma 
\ref{deg lem}, $\rest(\delta_f(t), \delta_{f+\alpha_j}(t))$ has total degree $2(n-1)d(n) - d(n)^2$
in the coefficients $a_{n-1},\dots,a_1$, which equals $\frac{3}{4}(n-1)^2$ for odd $n$
and $\frac{(3n-2)(n-2)}{4}$ for even $n$; in either case, we may round this up to $\frac34n^2$ and obtain\footnote{
Note that, as $n \ge 3$, the process of rounding up to $\frac34n^2$ adds at least $\frac{15}4 > 2$. 
This, together with the rounding of the bound on $\#A_n$, covers the two instances where we neglected an error 
of $rq^{n-1}$: The first in the assumption that $\delta(F+\alpha_j) \neq 0$, the second in bounding $C_n$.}
\begin{equation}\label{B_n bound}
\#B_n < \frac{3}{4}(r-1)n^2q^{n-2}
\end{equation}
\subsection{Bounding $A_n$} We have seen that the degree of $\xi_f$
in $t$ is at most $2d(n)$ and that the total degree of $\xi_f$ in
$a_n,a_{n-1},\dots,a_1,t$ is $2(n-1)$.
$\delta_f$ has total degree $n-1$ and degree $d(n)$ in $t$.
Thus we find that the degree of $\Xi_f$ in $t$ is at most $4d(n)-2$
(i.e. $2n-4$ for odd $n$ and $2n-6$ for even $n$), and its total degree in $a_n,a_{n-1},\dots,a_1,t$
is exactly $4n-6$. As the degree of $\delta_f$ is constant, we may assume that 
the polynomial given by $\rest(\delta_f,\Xi_f)$ is fixed 
by always assuming $\Xi_f$ is of degree exactly $4d(n)-2$.
This is valid, as adding leading zeros to only one of the polynomials multiplies the resultant by a 
non-zero factor. We now have
\begin{align*}
  & \deg_t \delt_f = d(n), &
  & \tdeg \delt_f = n-1 \\
  & \deg_t \Xi_f = 4d(n)-2, &
  & \tdeg \Xi_f = 4n-6
\end{align*}
Hence by Lemma \ref{deg lem}, the degree of $\rest(\delta_f,\Xi_f)$ in 
$a_{n-1},\dots,a_1$ is \linebreak
$(n-1)(4d(n)-2)+d(n)(4n-6)-d(n)(4d(n)-2)$,
which is equal to $(n-1)(3n-5)$ for odd $n$ and $3n^2-10n+6$ for even $n$. In either case,
we may round this up to $3n^2$ and obtain 
\begin{equation}\label{A_n bound}
\#A_n < 3n^2q^{n-2}
\end{equation}
and combining this with \eqref{B_n bound} we get
\begin{equation}
\#G_n^c < \frac{3}{4}(r+3)n^2q^{n-2}
\end{equation}
proving theorem \eqref{main thm}.
\section{Non-vanishing of the resultants}

\subsection{Non-vanishing of the polynomials defining $B_n$}
\begin{prop}\label{prop:resultant}
Given a non-zero polynomial $\alpha\in \fq[x]$ with $\deg \alpha<n$,
the function $a\mapsto \rest(\delta_f(t),\delta_{f+\alpha}(t))$ is not the zero
polynomial,
that is, the polynomial function
\begin{equation}
  R(a):=\rest_t(\delta_f(t),\delta_{f+\alpha}(t))\in \FF_2[\vec a]
\end{equation}
is not identically zero.
\end{prop}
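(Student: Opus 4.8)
The plan is to produce, for the given nonzero $\alpha$, a single monic polynomial $g\in\overline{\fq}[x]$ of degree $n$ with $\rest_t(\delta_g(t),\delta_{g+\alpha}(t))\neq0$. This suffices: the resultant is unchanged when $g$ is replaced by $g+c$ for a constant $c$ (this substitutes $t\mapsto t+c$ in both arguments), so we may assume $g$ has no constant term, and then it is exactly $R$ evaluated at the coefficients of $g$, so one nonvanishing choice shows $R\not\equiv0$. I will take $g$ with $g'(x)=(x-s)^{2d(n)}$ and $g(s)=0$ for a parameter $s$. Then $g'$ has the single root $s$, of multiplicity $\deg g'=2d(n)$, with leading coefficient $1$, so $\disc_x(g(x)+t)=(t+g(s))^{2d(n)}=t^{2d(n)}$ and hence $\delta_g(t)=t^{d(n)}$. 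Since we work in characteristic $2$ and both factors carry formal degree $d(n)$, this gives
\[
\rest_t(\delta_g(t),\delta_{g+\alpha}(t))=\delta_{g+\alpha}(0)^{d(n)},
\]
which is nonzero exactly when $(g+\alpha)(x)$ is squarefree. So it remains only to choose $g$, within this family, making $(g+\alpha)(x)$ squarefree.

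For odd $n$, take $g(x)=(x-s)^n$; then $g'(x)=(x-s)^{n-1}$ and $g(s)=0$, and I claim $(x-s)^n+\alpha(x)$ is squarefree for all but finitely many $s\in\overline{\fq}$. Treating $s$ as an indeterminate and setting $w=x-s$ turns the polynomial into $(w+s)^n+\alpha(w)$, whose derivative with respect to $s$ is $n(w+s)^{n-1}=(w+s)^{n-1}$; its only irreducible factor is $w+s$, and $w+s\nmid(w+s)^n+\alpha(w)$ because $\alpha\neq0$. Hence $(w+s)^n+\alpha(w)$ is squarefree in $w$ over $\overline{\fq}(s)$, so $\disc_w((w+s)^n+\alpha(w))$ is a nonzero polynomial in $s$ and vanishes for only finitely many $s\in\overline{\fq}$.

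For even $n$, $g'$ has degree $n-2$ and determines $g$ only up to a polynomial in $x^2$; take $g(x)=(x-s)^{n-1}+Q((x-s)^2)$ with $Q(v)=v^{n/2}+q_{n/2-1}v^{n/2-1}+\dots+q_1v$ monic of degree $n/2$ and zero constant term, so $g'(x)=(x-s)^{n-2}$ and $g(s)=Q(0)=0$. With $w=x-s$, the polynomial $(g+\alpha)(x)$ becomes the monic
\[
\Psi(w)=w^n+w^{n-1}+q_{n/2-1}w^{n-2}+\dots+q_1w^2+\alpha(w+s).
\]
Decompose $\Psi(w)=E(w^2)+w\,O(w^2)$ into even and odd parts in $w$: here $O(v)=v^{(n-2)/2}+(\text{odd part of }\alpha(w+s))$ involves only $s$, while $E(v)=v^{n/2}+q_{n/2-1}v^{(n-2)/2}+\dots+q_1v+(\text{even part of }\alpha(w+s))$. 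Since $E(w^2)$ has vanishing derivative, $\Psi'(w)=O(w^2)$, and for the square root $w=\sqrt{\rho}$ of a root $\rho$ of $O$ one computes $\Psi(\sqrt{\rho})=E(\rho)$; hence $\Psi$ is squarefree iff $\gcd(E(v),O(v))=1$. Now regard $s,q_1,\dots,q_{n/2-1}$ as independent indeterminates; the constant terms are the Taylor coefficients $O(0)=\alpha'(s)$ and $E(0)=\alpha(s)$. A common root $\rho$ of $E$ and $O$ is algebraic over $\overline{\fq}(s)$, so it lies in a field over which $q_1,\dots,q_{n/2-1}$ remain algebraically independent, and the relation $\sum_{j\geq1}q_j\rho^j=\rho^{n/2}+(\text{even part of }\alpha)(\rho)$ then forces $\rho=0$; but that would need $\alpha'(s)=0$ and $\alpha(s)=0$, impossible since $\alpha\neq0$. (One also checks $O\not\equiv0$, so that $\Psi'\neq0$ and the squarefreeness criterion applies.) Thus $\Psi$ is squarefree over $\overline{\fq}(s,q_1,\dots,q_{n/2-1})$, $\disc_w(\Psi)$ is a nonzero polynomial in these parameters, and a suitable specialization over $\overline{\fq}$ furnishes the desired $g$.

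The main obstacle is the even case: the identity $\Psi(\sqrt{\rho})=E(\rho)$ coming from the even/odd split, which converts squarefreeness of $\Psi$ into coprimality of $E$ and $O$, together with the genericity argument in the parameters $q_j$ — especially the handling of the root $\rho=0$, the only place where $\alpha\neq0$ is used, which must also cover the degenerate case where $\alpha$ has only even-degree terms (then $\alpha'\equiv0$, $O(v)=v^{(n-2)/2}$, and $\gcd(E,O)=1$ reduces to $E(0)=\alpha(s)\neq0$).
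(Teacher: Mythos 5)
Your overall strategy is sound and is genuinely different in its details from the paper's proof (the paper simply observes that the odd-characteristic argument of Carmon--Rudnick, based on the fact that the roots of $D_f(t)=\delta_f(t)^2$ are the critical values $f(\rho)$, $f'(\rho)=0$, goes through unchanged in characteristic~2). Your reduction is clean: choosing $g$ with $g'=(x-s)^{2d(n)}$ and $g(s)=0$ does give $\delta_g(t)=t^{d(n)}$ and hence $\rest_t(\delta_g,\delta_{g+\alpha})=\delta_{g+\alpha}(0)^{d(n)}$, so everything reduces to making $g+\alpha$ separable; and your even-$n$ argument is correct, since there you work with actual common roots of $E$ and $O$ in an algebraic closure, which is exactly the condition $\disc_w\Psi\neq 0$.

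The gap is in the odd case. You show that $\Phi=(w+s)^n+\alpha(w)$ has no repeated irreducible factor over $\overline{\FF}_q(s)$ (via $\partial_s\Phi=(w+s)^{n-1}$), and then conclude that $\disc_w\Phi\neq 0$. That implication is false over an imperfect field of characteristic~$2$: an \emph{inseparable} irreducible factor, e.g.\ $w^2+s$ over $\FF_2(s)$, is squarefree yet acquires a double root $\sqrt{s}$ in the algebraic closure and forces the discriminant to vanish identically in $s$. Your $\partial_s$ test does not detect such factors: if $\Phi=PQ$ with $P=P(s,w^2)$ appearing only to the first power, then $\partial_s\Phi=(\partial_sP)Q+P\,\partial_sQ$ need not be divisible by $P$ (compare $\Phi=(w^2+s)(w+1)$, where $\partial_s\Phi=w+1$). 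So what you have proved does not yield $\disc_w\Phi\neq0$. The statement is nevertheless true and the hole can be filled by arguing with roots directly, as you did in the even case: a common root $\rho$ of $\Phi$ and $\partial_w\Phi$ satisfies $(\rho+s)^{n-1}=\alpha'(\rho)$ and $(\rho+s)^n=\alpha(\rho)$; if $\alpha'(\rho)=0$ this forces $\rho=s$ and $\alpha(s)=0$ (finitely many bad $s$), and otherwise it forces $\alpha(\rho)^{n-1}=\alpha'(\rho)^n$, which is a nontrivial polynomial condition on $\rho$ by comparing degrees (using $\deg\alpha<n$ and $\alpha\neq 0$), again leaving only finitely many bad $s$. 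With that repair the proof is complete.
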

We note that the proof of the analogous proposition in odd characteristic \cite[Proposition 3.1]{Carmon Rudnick} did not
in fact rely on the characteristic being odd. We may follow the same arguments
to see again that $R(a)$ cannot be identically zero. More accurately, the proof in \cite{Carmon Rudnick} referred to
the polynomial $\rest_t(D_f(t),D_{f+\alpha}(t))$ which, in characteristic 2, is equivalent to
$\rest_t(\delta_f^2(t),\delta_{f+\alpha}^2(t)) = R(a)^4$. The main observation behind the proof
was that the roots of $D_f(t)$, which are the same as the roots of $\delta_f(t)$, are exactly those $t$
for which there exists some $\rho$ 
(in some fixed algebraic closure of $\fq$) that satisfies 
$f'(\rho) = 0$ and $t=-f(\rho)$. This observation is just as valid in characteristic 2,
as are the calculations that followed.
This completes the proof of inequality \eqref{B_n bound}.
\subsection{Non-vanishing of the polynomials defining $A_n$}
We wish to show that the algebraic condition for being in $A_n$, i.e.
$\rest(\delta_f,\Xi_f) = 0$, is not always satisfied.
We will demonstrate this by giving explicit examples of $f$ that do not satisfy the equation.\footnote{
A different approach was used in the case of odd characteristic. 
The approach used here could have been applied there partially. For example, the polynomial
$f+t = x^n+ax+t$ yields $D_f(t) = (-1)^{\frac{(n-1)n}{2}}(n^nt^{n-1}+(1-n)^{n-1}a^n)$, which satisfies
$\disc_t(D_f(t)) \neq 0$ given $\gcd(q,n(n-1)) = 1, a \neq  0$. While this covers many cases, 
the remaining cases are not as easily dealt with. 
The algebraic approach managed to avoid this division into cases completely.}
We will construct two generic examples, depending on the parity of $n$.
The relevant computations are given in more detail in section \ref{sec:computation}.

Consider first $n \ge 3$ odd, and take $f+t = x^n + ax^2 + t$. An easy computation
 then yields
\begin{equation*}
\begin{split}
 \delta_f & = t^{\frac{n-1}{2}} \\
 \xi_f & = \frac{(-1)^\frac{n-1}{2}n^n-1}{4}t^{n-1}+a^nt \\
 \xi'_f &= a^n
\end{split}
\end{equation*}

Note that the only root of $\delta_f$ is at $t=0$, and also $\xi_f(0) = 0, \xi'^2_f(0) = a^{2n}$.
Hence the value of $\Xi_f = \xi_f \delta'^2_f - \xi'^2_f$ at $t=0$ is $a^{2n}$. Hence for any $a \neq 0$, it is clear
that this polynomial cannot have common roots with $\delta_f$, hence $\rest(\delta_f,\Xi_f) \neq 0$.

For the case of $n \ge 4$ even, we will consider the polynomial $f+t = x^n + ax^{n-1} + bx + t$,
with $a,b \neq 0$. 
Let us write $n = 2m$. An easy computation yields $\delta_f(t) = a^mt^{m-1}+b^m$ and 
$\delta'_f(t) = (m-1)a^m t^{m-2}$, and a longer computation yields 
% \begin{equation*}
% \xi_f = \begin{cases}
%            a^{m+1}b^{m+1}t^{m-2} &m \equiv 0\quad(mod 4) \\
%            a^{m-1}b^{m-1}t^{m} &m \equiv 1\quad(mod 4) \\
%            a^{m+1}b^{m+1}t^{m-2} + \delta^2_f &m \equiv 2\quad(mod 4) \\
%            a^{m-1}b^{m-1}t^{m} + \delta^2_f &m \equiv 3\quad(mod 4) 
%           \end{cases}
% \end{equation*}
\begin{equation*}
\xi_f = \begin{cases}
           0 &m \equiv 0,1\pmod 4 \\
           \delta^2_f &m \equiv 2,3\pmod 4 
          \end{cases}
          +
          \begin{cases}
           a^{m+1}b^{m+1}t^{m-2} &m \equiv 0,2\pmod 4 \\
           a^{m-1}b^{m-1}t^{m} &m \equiv 1,3\pmod 4 
          \end{cases}
\end{equation*}
And hence
\begin{equation*}
\Xi_f = \xi_f \delta'^2_f - \xi'^2_f = \begin{cases}
           a^{3m+1}b^{m+1}t^{3m-6} &m \equiv 0\pmod 4 \\
           a^{3m+1}b^{m+1}t^{3m-6} + a^{n}t^{n-4}\delta^2_f &m \equiv 2\pmod 4 \\
           a^{n-2}b^{n-2}t^{n-2} &m \equiv 1\pmod 2
          \end{cases}
\end{equation*}
As $b \neq 0$, clearly $t = 0$ is not a root of $\delta_f$, but in all cases above,
it is either the sole root of $\Xi_f$ or of a combination of $\Xi_f$ and $\delta_f$. Either way,
it is clear that $\Xi_f$ and $\delta_f$ can have no common roots, and $\rest(\delta_f,\Xi_f) \neq 0$.
Thus we have shown inequality \eqref{A_n bound}.

\section{The case $n=2$}\label{sec:n2}
For $n=2$, the inequality \eqref{main thm eq} is not always valid -- 
sometimes there are correlations in the M\"obius function. 
The following proposition covers all cases where $n=2$:
\begin{prop}\label{n2 proposition}
Let $\alpha_1, \dots, \alpha_r\in \fq[x]$ be distinct linear polynomials $\alpha_i = a_ix+b_i$,
and let $\epsilon_1, \dots,\epsilon_r \in \{1,2\}$.
Set
% \begin{equation*}
% \begin{split}
\begin{align*}
&A = \{a_i : 1 \le i \le r\},&\\
&b_a = \sum_{i :\; a_i = a} \epsilon_ib_i,&
A_b = \{a \in A : b_a \neq 0 \},\\
&\gamma_a = \sum_{i :\; a_i = a} \epsilon_i \mod 2 ,&
A_\gamma = \{a \in A : \gamma_a \neq 0 \}
\end{align*}
% \end{split}
% \end{equation*}
One of the following relations then holds:
\begin{equation}
\begin{cases}
 |C(\alpha_1,\dots, \alpha_r;2)| < rq & A_\gamma \neq \emptyset \\
 |C(\alpha_1,\dots, \alpha_r;2)| < rq^\frac32 & A_\gamma = \emptyset, A_b \neq \emptyset \\
 \;\;C(\alpha_1,\dots, \alpha_r;2) \ge q^2 - rq & A_\gamma = \emptyset, A_b = \emptyset
\end{cases}
\end{equation}
\end{prop}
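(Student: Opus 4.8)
The plan is to compute $C(\alpha_1,\dots,\alpha_r;2)$ directly from the definition, exploiting the fact that for $n=2$ monic polynomials are parametrized by only two coefficients. Write $F(x)=x^2+ux+v$ with $(u,v)\in\fq^2$; then $F+\alpha_i = x^2+(u+a_i)x+(v+b_i)$. By the analogue to Pellet's formula \eqref{Pellet2}, whenever $F+\alpha_i$ is squarefree we have $\mu(F+\alpha_i)=(-1)^2\chi_2(\Berl(F+\alpha_i))=\chi_2\!\big(\frac{(v+b_i)}{(u+a_i)^2}\big)$, using the degree-2 formula $\Berl(ax^2+bx+c)=ac/b^2$ computed earlier. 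The polynomial $x^2+(u+a_i)x+(v+b_i)$ fails to be squarefree exactly when $u+a_i=0$ (in characteristic $2$ the discriminant of a quadratic is the square of its linear coefficient), and in that case $\mu$ vanishes unless $v+b_i=0$ too, where the polynomial is $(x+\sqrt{v})^2$ so $\mu=0$ as well; thus one may simply record $\mu(F+\alpha_i)=0$ when $u=a_i$. So apart from the at most $rq$ pairs $(u,v)$ with $u\in A$, the summand is $\prod_{i=1}^r \chi_2\!\big(\frac{v+b_i}{(u+a_i)^2}\big)$, and since $\chi_2$ is a homomorphism killing squares, this equals $\chi_2\!\big(\sum_{i=1}^r \frac{v+b_i}{(u+a_i)^2}\big)$, or after grouping the $\epsilon_i$'s and using that only $\epsilon_i\bmod 2$ matters inside $\chi_2$ (terms with $\epsilon_i$ even contribute a square), $\chi_2\!\big(\sum_{a\in A}\frac{\gamma_a v + b_a}{(u+a)^2}\big)$ where $\gamma_a,b_a$ are as in the statement (here I am being slightly informal: one must check the $\epsilon_i=2$ terms on non-squarefree factors, but those again force $\mu=0$, so they only affect the excluded $\le rq$ pairs).

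Next I would fix $u\notin A$ and sum over $v\in\fq$. For fixed $u$, put $p_u(v)=\sum_{a\in A}\frac{\gamma_a v+b_a}{(u+a)^2}$, an affine function of $v$ over $\fq$. If some $\gamma_a\neq 0$, i.e. $A_\gamma\neq\emptyset$, then $p_u$ is a \emph{non-constant} affine function of $v$ (its $v$-coefficient is $\sum_{a\in A_\gamma}(u+a)^{-2}$, which need not vanish, but even when it does for some exceptional $u$'s one gets at most $q^{1/2}$-type savings — more carefully, $v\mapsto p_u(v)$ is affine, so $\sum_v \chi_2(p_u(v))$ is $0$ when the linear part is nonzero and $\pm q$ when it is zero; the linear part $\sum_{a\in A_\gamma}(u+a)^{-2}=\big(\sum_{a\in A_\gamma}(u+a)^{-1}\big)^2$ vanishes for at most $|A_\gamma|-1<r$ values of $u$). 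Summing over the $\le q$ values of $u$ and adding back the excluded $\le rq$ pairs gives the bound $<rq$ in the first case — I should double-check the constant, but since each bad $u$ contributes $\le q$ and there are $<r$ of them, plus $\le rq$ from $u\in A$, the total is $<rq + rq$; to get the stated $rq$ I expect one actually argues that for $u\in A$ the contribution is strictly smaller or that the two error sources overlap, so the arithmetic needs care.

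When $A_\gamma=\emptyset$, every $\gamma_a=0$, so $p_u(v)=\sum_{a\in A}\frac{b_a}{(u+a)^2}=\big(\sum_{a\in A_b}\frac{\sqrt{b_a}}{u+a}\big)^2$ is independent of $v$; write it as $R(u)^2$ where $R(u)=\sum_{a\in A_b}\frac{\sqrt{b_a}}{u+a}\in\fq(u)$. Then $\sum_v\chi_2(p_u(v))=q\,\chi_2(R(u)^2)=q$ for every $u$ not a pole of $R$ (where $\chi_2$ of a square is $1$), so $C=q\sum_{u\notin A}\chi_2(R(u)^2)+(\text{error from }u\in A)=q(q-|A|)+O(rq)$. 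If moreover $A_b=\emptyset$ then $R\equiv 0$, $p_u\equiv 0$, and we get $C\ge q(q-|A|)-rq\ge q^2-rq$ (absorbing $|A|\le r$), which is the third case. If $A_b\neq\emptyset$ but $A_\gamma=\emptyset$, I instead need to bound $\big|\sum_u\chi_2(R(u)^2)\big|$; but $\chi_2(R(u)^2)=1$ off the poles, so naively this is again $\approx q$, which would only give $\sim q^2$, not $q^{3/2}$. The resolution — and this is the one subtle point — is that when $\gamma_a=0$ for all $a$ but $b_a\neq 0$ for some $a$, one cannot simply substitute Pellet's formula blindly: the product over $i$ includes factors where $F+\alpha_i$ may be non-squarefree, and the squarefree-ness constraints interact. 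More precisely, writing $C=\sum_{u,v}\prod_i \mu(F+\alpha_i)$ and \emph{not} immediately discarding $u\in A$, one should reorganize as a character sum in $v$ for each $u$ and then apply Weil's bound (Theorem \ref{weil 2}) to an Artin–Schreier curve in the variable $u$ of bounded genus; the genus bound is $O(r)$ and the height $O(r)$, producing the $q^{3/2}$ with constant $r$.

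The main obstacle is this second case ($A_\gamma=\emptyset$, $A_b\neq\emptyset$): getting $q^{3/2}$ rather than the trivial $q^2$ requires recognizing that the relevant sum is genuinely a nontrivial character sum — after summing over $v$ one is left with $q\cdot\#\{u : F+\alpha_i\text{ squarefree for all }i\text{ and }R(u)=0\}$ corrected by terms at the bad $u$'s and at $u\in A$, and the point is that the constraint "$F+\alpha_i$ squarefree" becomes "$u\neq a_i$", a condition defining a subscheme, so the count is $q-|A|$ up to error, and the genuine oscillation lives in the residual character sum over $u$ where for each $a\in A$ exactly one value $v=b_a$ is excluded, turning the sum into $\sum_u \chi_2\big(\sum_{a\in A}\frac{b_a}{(u+a)^2}\big)$ restricted away from $u\in A$ — wait, this is still $\equiv 1$. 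So in fact I believe the correct statement is that in this case $C$ counts, with sign, pairs where \emph{not all} $F+\alpha_i$ are squarefree, and those are governed by $u\in A$, $v$ arbitrary: each such stratum contributes a sum over $v$ of $\prod_{i:a_i\neq u}\chi_2(\tfrac{v+b_i}{(u+a_i)^2})$ times $0$ or a correction — so one must carefully re-examine the $u\in A$ strata, where the $\mu$-product has a zero factor unless a matching $\epsilon_i=2$ repair occurs, and it is the combinatorics of these repairs (encoded by $A_b$) that produces a nonzero but small contribution. I expect the cleanest route is: split $\fq^2=\{u\notin A\}\sqcup\{u\in A\}$; on the first piece apply the above to get exactly $0$ (since $p_u\equiv 0$ off poles forces $\chi_2\equiv 1$... giving $q(q-|A|)$, not $0$ — so actually the first case analysis must be redone) — in short, I anticipate that the honest accounting requires being very careful about which strata the main term $q^2$ comes from and which contribute the oscillating $O(q^{3/2})$, and pinning that down is the crux; once the strata are correctly identified, each resulting one-variable sum is either elementary (affine in $v$) or a bounded-genus Artin–Schreier sum handled by Theorem \ref{weil 2}.
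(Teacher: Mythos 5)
Your reduction to the double sum $\sum_{s\notin A}\sum_{t}\chi_2\bigl(\sum_{a\in A}(b_a+\gamma_a t)/(s+a)^2\bigr)$ and your handling of the first and third cases follow the paper's route. (One small simplification: there is nothing to ``add back'' for the pairs with $u\in A$ --- there the product of M\"obius factors is exactly zero, so the first bound is cleanly $(\#A_\gamma-1)q<rq$ with no second error source.)

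The genuine gap is in the middle case $A_\gamma=\emptyset$, $A_b\neq\emptyset$, and it stems from a misreading of $\chi_2$. This is the Artin--Schreier character: $\chi_2(x)=1$ iff $x=y^2+y$ for some $y\in\fq$, not a quadratic-residue symbol. In characteristic $2$ every element of $\fq$ is a square, so if ``$\chi_2$ of a square is $1$'' were true, $\chi_2$ would be identically $1$. What is actually true is $\chi_2(x^2)=\chi_2(x)$, since $x^2+x=\wp(x)$ lies in the kernel of $\chi_2$ and $\chi_2$ is additive-to-multiplicative. Consequently, writing $\sum_{a\in A_b}b_a/(s+a)^2=R(s)^2$ with $R(s)=\sum_{a\in A_b}\sqrt{b_a}/(s+a)$, the quantity to bound is $q\sum_{s\notin A}\chi_2\bigl(R(s)\bigr)$ --- a genuinely oscillating character sum, with no need for the stratification by non-squarefree loci that you contemplate (those strata contribute exactly $0$). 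Since $R$ has exactly $\#A_b$ distinct simple poles, the curve $y^2+y=R(s)$ has genus $\#A_b-1$ by the genus formula in Theorem \ref{weil 2}, and $A_\gamma=\emptyset$ forces each $a\in A_b$ to occur at least twice among the $a_i$, so $\#A_b\le r/2$; Weil's bound then gives $|C|\le\bigl(2(\#A_b-1)q^{1/2}+1\bigr)q<rq^{3/2}$. Without the identity $\chi_2(R^2)=\chi_2(R)$ your argument cannot produce the $q^{3/2}$ saving, and the closing discussion of ``repair'' combinatorics at the strata $u\in A$ does not recover it.
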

\begin{proof}
One may easily see that for a quadratic polynomial,
\begin{equation}
\Berl(x^2+ax+b) = \frac{b}{a^2} 
\end{equation}
In particular, $\mu(x^2+ax+b) = 0 \iff a = 0$, and otherwise
\begin{equation}
 \mu(x^2+ax+b) = \chi_2\Big(\frac{b}{a^2}\Big).
\end{equation}
Clearly for $f = x^2 + sx + t$, $\prod_{i} \mu(f+\alpha_i)^{\epsilon_i} = 0 \iff s \in A$,
so we may take our sum only over $s \not \in A$. 
There is no further contribution to the product from $\alpha_i$ where $\epsilon_i = 2$.
We compute:
\begin{equation}\label{n2 formula}
 \begin{split}
 C(\alpha_1,\dots, \alpha_r;2) &= \sum_{f\in M_2}\prod_{i} \mu(f+\alpha_i)^{\epsilon_i}
  = 
 \sum_{s \not \in A} \sum_{t \in \fq}\chi_2 \bigg(\sum_i \frac{\epsilon_i(b_i+t)}{(s+a_i)^2}\bigg) 
  \\ &=
 \sum_{s \not \in A} \sum_{t \in \fq}\chi_2 \bigg(\sum_{a \in A} \frac{b_a+\gamma_at}{s^2+a^2}\bigg) 
  \\ &=
 \sum_{s \not \in A} \sum_{t \in \fq}\chi_2 \bigg(\sum_{a \in A} \frac{b_a}{s^2+a^2}\bigg)\chi_2 \bigg(\sum_{a \in A} \frac{\gamma_a}{s^2+a^2}t\bigg) 
  \\ &=
 \sum_{s \not \in A} \chi_2 \bigg(\sum_{a \in A_b} \frac{b_a}{s^2+a^2}\bigg)\bigg(\sum_{t \in \fq}\chi_2 \Big(\big(\sum_{a \in A_\gamma} \frac{1}{s^2+a^2}\big)\;t\Big)\bigg)
 \end{split}
\end{equation}
Note that for any constant $c$,
\begin{equation*}
 \sum_{t \in \fq}\chi_2 \big(c t\big) = 
 \begin{cases}
 0 & c \neq 0 \\
 q & c = 0
 \end{cases}
\end{equation*}
We now have two cases. If $A_\gamma \neq \emptyset$, then
$\sum_{a \in A_\gamma} \frac{1}{s^2+a^2} = 0$ for at most $\#A_\gamma-1 < r$ values of $s$.
Hence in this case we have 
\begin{equation}
 |C(\alpha_1,\dots, \alpha_r;2)| \le (\#A_\gamma - 1)q < rq
\end{equation}
which is the first case of proposition \ref{n2 proposition}. If on the other hand $A_\gamma$ is empty,
then \eqref{n2 formula} becomes
\begin{equation}\label{n2 formula 2}
 C(\alpha_1,\dots, \alpha_r;2) = q \sum_{s \not \in A} \chi_2 \Big(\sum_{a \in A_b} \frac{b_a}{s^2+a^2}\Big)
\end{equation}
Once again, we have two cases. If $A_b = \emptyset$, then clearly
\begin{equation}
 C(\alpha_1,\dots, \alpha_r;2) = q(q-\#A) \ge q^2 - rq
\end{equation}
i.e., there is full correlation - every term in the sum is either 0 or 1. 
This is the third case of proposition \ref{n2 proposition}.
Finally, we are left with the case $A_b \neq \emptyset$. By the change of variables 
$y \leftarrow y + \sum_{a \in A_b} \frac{\sqrt{b_a}}{s+a}$, 
we see that the curve $y^2 + y = \sum_{a \in A_b} \frac{b_a}{s^2+a^2}$ is equivalent to
the curve $y^2 + y = \sum_{a \in A_b} \frac{\sqrt{b_a}}{s+a}$. The rational function 
$\sum_{a \in A_b} \frac{\sqrt{b_a}}{s+a}$ has exactly $\#A_b$ distinct simple poles, hence
by Theorem \ref{weil 2}, the genus of these curves is exactly $\#A_b-1$.
Note that $A_b \le \frac{r}{2}$: indeed, $A_\gamma = \emptyset$ implies
that each $a \in A_b$ is represented at least twice
in the sequence $\{a_i\}$. 
Applying Theorem \ref{weil 2} to equation \eqref{n2 formula 2} then yields
\begin{equation}
 |C(\alpha_1,\dots, \alpha_r;2)| \le (2(\#A_b-1)q^{\frac12}+1)q < rq^\frac32
\end{equation}
Completing the proof of proposition \ref{n2 proposition}.
\end{proof}

\section{Square Independence}

In \cite[Proposition 3.1]{LBS}, Bary-Soroker computes the following Galois group:
\begin{prop}[Bary-Soroker]\label{snr prop}
 Let q be an odd prime power, let n, r be positive integers, let $\mathbf{\alpha}
 = (\alpha_1,\dots,\alpha_r) \in \fq[x]^r$ be an $r$-tuple of distinct polynomials each
 of degree $< n$, let $\mathbf{U} = (U_0,\dots,U_{n-1})$ be an $n$-tuple of variables over $\fq$,
 and let $\mathcal{F} = x^n + U_{n-1}x^{n-1} + \dots + U_0 \in \fq[\mathbf{U},x]$. For each $i = 1,\dots,r$,
 let $\mathcal{F}_i = \mathcal{F}+\alpha_i$. Let ${\tilde \FF}_q$ be an algebraic closure of $\fq$, 
 let $E = {\tilde \FF}_q(\mathbf{U})$, let $F_i$ be the splitting
 field of $\mathcal{F}_i$ over $E$, and let $F$ be the splitting field of
 $\prod_{i=1}^r \mathcal{F}_i$ over $E$. Then $\gal(F/E) \cong S_n^r$. 
\end{prop}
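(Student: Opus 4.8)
The plan is to realize $\gal(F/E)$ as a subdirect product of the individual Galois groups and then show it fills out all of $S_n^r$. First I would treat each factor separately: writing $\alpha_i=\sum_{j<n}\alpha_{i,j}x^j$, the polynomial $\mathcal{F}_i=x^n+\sum_{j<n}(U_j+\alpha_{i,j})x^j$ has coefficients $U_j+\alpha_{i,j}$ that are algebraically independent over $\tilde\FF_q$, so $\mathcal{F}_i$ is the generic polynomial of degree $n$ and hence $\gal(F_i/E)\cong S_n$ by the classical computation. Since $F$ is the compositum of the Galois extensions $F_1,\dots,F_r$ of $E$, the restriction map gives an injection $G:=\gal(F/E)\hookrightarrow\prod_{i=1}^r\gal(F_i/E)=S_n^r$ whose image surjects onto each factor. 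It remains to prove that this subdirect product is everything.

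The reduction would rest on a purely group-theoretic lemma: a subdirect product $G\le S_n^r$ equals $S_n^r$ as soon as (a) for every pair $i\ne j$ the projection $G\to S_n\times S_n$ is onto, and (b) the composite $G\to\prod_i S_n^{\mathrm{ab}}=(\Z/2\Z)^r$ given by the $r$ sign characters is onto. For $n\ge5$ the proof is bookkeeping with normal subgroups: the intersection $N:=G\cap A_n^r$ is normal in $G$, and since the normal subgroups of $S_n$ and of $S_n\times S_n$ lying in $A_n$ (resp.\ $A_n\times A_n$) are forced by (a) to project onto the full $A_n$ in each coordinate, $N$ is a pairwise-full subdirect product of the nonabelian simple group $A_n$; Ribet's/Goursat's lemma then gives $N=A_n^r$, and (b) upgrades this to $G=S_n^r$. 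The small cases $n\le4$, where $A_n$ is not simple nonabelian, I would dispatch by the same analysis applied to the relevant normal series (the layer $V_4\trianglelefteq A_4$ for $n=4$, and the purely abelian situations $S_n^r=(\Z/2\Z)^r$ for $n\le2$ or the $\Z/3$ and $\Z/2$ layers for $n=3$).

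This leaves conditions (a) and (b) to verify, and both unwind into two concrete inputs. For (a), by Goursat the projection $G\to S_n\times S_n$ fails to be onto exactly when $F_i$ and $F_j$ share a nontrivial common Galois subextension; for $n\ge5$ the only candidates are a common quadratic extension or a common full $S_n$-extension. A shared $S_n$ would mean $F_i=F_j$, i.e.\ $\mathcal{F}_i$ and $\mathcal{F}_j$ have the same splitting field, which I would rule out by \emph{genericity}: since $\alpha_i\ne\alpha_j$ the discriminants $\disc\mathcal{F}_i,\disc\mathcal{F}_j\in\tilde\FF_q[\mathbf U]$ cut out distinct ramification loci of $F/E$, and a suitable specialization of $\mathbf U$ (Hilbert irreducibility over the function field) distinguishes the two splitting fields. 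The shared-quadratic case of (a), together with condition (b), is precisely \emph{square independence}: that no nonempty subproduct $\prod_{i\in S}\disc\mathcal{F}_i$ is a square in $E$.

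Proving square independence is the crux, and I expect it to be the main obstacle. Equivalently, viewing each $\disc\mathcal{F}_i$ as a polynomial in $\mathbf U$, I must exhibit, for every nonempty $S\subseteq\{1,\dots,r\}$, an irreducible factor occurring in $\prod_{i\in S}\disc\mathcal{F}_i$ to an odd power. The natural attack is to specialize all but one of the variables and track, via the structure of the discriminant as a polynomial in a single $U_j$, a distinguishing factor whose parity is controlled by the distinct shifts $\alpha_{i,j}$; distinctness of the $\alpha_i$ is what guarantees such a factor exists. This parity and factorization analysis of the discriminants is the technical heart of the argument, with the group theory and the genericity step being comparatively routine.
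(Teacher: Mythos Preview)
Your approach is correct and aligns with the paper's. The paper attributes the proposition to Bary-Soroker and records his proof structure rather than proving it from scratch: each $\gal(F_i/E)\cong S_n$ by the generic-polynomial argument you give, and then a lemma (cited as Lemma~\ref{disj lem}, from \cite{LBS 2}) asserts that linear independence of the square classes $\delt_x(\mathcal{F}_i)$ in $E^\times/(E^\times)^2$ forces the $F_i$ to be linearly disjoint over $E$, whence $\gal(F/E)\cong S_n^r$ immediately. The key input---square independence (Lemma~\ref{indep lem})---is precisely the crux you identify.

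The only substantive difference is that you unpack the implication ``square independence $\Rightarrow$ linear disjointness'' into an explicit Goursat/Ribet analysis of subdirect products in $S_n^r$, whereas the paper black-boxes it. Your version is more self-contained; the paper's is shorter. One redundancy in your outline: the ``shared full $S_n$-extension'' case in condition~(a) need not be treated separately via ramification loci or Hilbert irreducibility. If the square classes of $\disc\mathcal{F}_i$ and $\disc\mathcal{F}_j$ are already independent in $E^\times/(E^\times)^2$, then the unique quadratic subextensions $E(\sqrt{\disc\mathcal{F}_i})\subset F_i$ and $E(\sqrt{\disc\mathcal{F}_j})\subset F_j$ are distinct, so $F_i\neq F_j$ follows for free. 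Thus both your conditions (a) and (b) collapse to square independence alone, which is exactly how the paper (following Bary-Soroker) organizes the argument.
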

We wish to extend this computation to even $q$ as well. The reliance on odd $q$ lies in the
following lemma (\cite[Lemma 3.3]{LBS}):
\begin{lem}\label{indep lem}
 For a separable polynomial $f \in E[x]$, denote by $\delt_x(f)$ the square class of its 
 discriminant $\disc_x(f)$ in the $\FF_2$-vector space $E^\times/(E^\times)^2$. The square classes
 $\delt_x(\mathcal{F}_1), \dots, \delt_x(\mathcal{F}_r)$ are linearly independent.
\end{lem}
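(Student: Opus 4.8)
The plan is to prove the lemma exactly as stated, reading it in the setting of Bary--Soroker's Proposition \ref{snr prop}, where $q$ is odd, so that the discriminant does determine a nontrivial class in the $\FF_2$-vector space $E^\times/(E^\times)^2$. First I would translate the conclusion into a squareness statement. Since the group law on $E^\times/(E^\times)^2$ is written multiplicatively and the scalars are in $\FF_2$, the classes $\delt_x(\mathcal F_1),\dots,\delt_x(\mathcal F_r)$ are linearly \emph{dependent} precisely when some nonempty $S\subseteq\{1,\dots,r\}$ satisfies $\prod_{i\in S}\disc_x(\mathcal F_i)\in(E^\times)^2$. So it suffices to show that no nonempty product of the discriminants $D_i:=\disc_x(\mathcal F_i)$ is a square in $E$. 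Because $\tilde{\FF}_q[\mathbf U]$ is a UFD, hence integrally closed, a polynomial that is a square in the fraction field $E$ is already the square of a polynomial; thus I may test squareness inside $\tilde{\FF}_q[\mathbf U]$ rather than in $E$.

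Next I would exploit the structure of each $D_i$ as a polynomial in the distinguished variable $U_0$ over the function field $L:=\tilde{\FF}_q(U_1,\dots,U_{n-1})$. Writing $\mathcal F_i=f+\alpha_i+U_0$ with $f=x^n+U_{n-1}x^{n-1}+\dots+U_1x$, the derivative $\mathcal F_i'=f'+\alpha_i'$ is independent of $U_0$, and the product formula for $\disc_x(\mathcal F_i)=\pm\rest_x(\mathcal F_i,\mathcal F_i')$ shows that, as a polynomial in $U_0$ over $L$, the roots of $D_i$ are exactly the negatives of the critical values, namely $U_0=-(f+\alpha_i)(\rho)$ as $\rho$ ranges over the roots of $\mathcal F_i'$. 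This is the same observation underlying the non-vanishing arguments of the previous section. Since $p\mid n$ and $p\mid(n-1)$ cannot hold simultaneously, $\mathcal F_i'$ is non-constant, so $\deg_{U_0}D_i=\deg_x\mathcal F_i'\ge 1$ for $n\ge 2$; each $D_i$ therefore has positive degree in $U_0$.

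The proof then rests on two non-vanishing inputs, both available in every characteristic. The first is that each $D_i$ is squarefree in $L[U_0]$, i.e. $\disc_{U_0}(D_i)\neq 0$ in $L$; equivalently, the critical values of the generic polynomial $\mathcal F_i$ are pairwise distinct. This is the $A_n$-type statement, established for odd $q$ in \cite{Carmon Rudnick} and, in characteristic $2$, in the present paper via the quantity $\Xi_f$. The second is that for $i\neq j$ the polynomials $D_i$ and $D_j$ are coprime in $L[U_0]$, i.e. $\rest_{U_0}(D_i,D_j)\neq 0$; geometrically, $\mathcal F_i$ and $\mathcal F_j$ have disjoint critical values, which is exactly Proposition \ref{prop:resultant} (whose proof is characteristic-free). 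Granting these, for any nonempty $S$ the product $\prod_{i\in S}D_i$ is a product of pairwise coprime squarefree polynomials of positive $U_0$-degree, hence is itself squarefree of positive degree in $L[U_0]$, and therefore not a square in $E$. This yields the desired linear independence.

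I expect the genuine obstacle to be the first input, the squarefreeness of $D_i$ as a polynomial in $U_0$, uniformly over all $n$ and all admissible characteristics. The naive route of computing $\disc_{U_0}(D_i)$ on an explicit family such as $x^n+ax+U_0$ succeeds only when $\gcd(q,n(n-1))=1$, as the footnote in the $A_n$ discussion records, and the remaining small-characteristic cases must be handled by the more robust resultant argument. Once squarefreeness and coprimality are secured, the combinatorial assembly into linear independence is immediate, and the odd-characteristic hypothesis enters only to guarantee that the square class of $\disc_x$ is the correct invariant in the first place.
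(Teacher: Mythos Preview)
Your proposal is correct for the statement as literally written (odd $q$, square classes in $E^\times/(E^\times)^2$), but it differs from the paper's own proof in two ways worth noting.

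First, the paper's proof block actually establishes the characteristic-$2$ reinterpretation of the lemma, with $\delt_x(f)$ taken to be the class $[\Berl(f)]$ in $E/\wp(E)$; the odd-characteristic version is attributed to \cite{LBS}. So you are proving the originally quoted statement, while the paper proves its Artin--Schreier analogue.

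Second, and more interestingly, the methods are genuinely different. You argue directly at the generic point: passing to $L=\tilde\FF_q(U_1,\dots,U_{n-1})$ and $L[U_0]$, you use the $A_n$-type non-vanishing to see that each $D_i$ is squarefree in $L[U_0]$, and Proposition~\ref{prop:resultant} to see that distinct $D_i$ are coprime there; unique factorisation then forces any nonempty product $\prod_{i\in S}D_i$ to be squarefree of positive degree, hence not a square in $E$. The paper instead argues by specialisation and counting: for each $e$ it specialises $(U_1,\dots,U_{n-1})$ into $\FF_{q^e}^{n-1}$, reuses the already-proven bounds on $A_n,B_n,C_n$ to show that at most $\tfrac34 r(r+3)n^2 q^{e(n-2)}$ of the $q^{e(n-1)}$ specialisations can yield a dependence among the classes $[\Berl(f+\alpha_i)]$ in $\FF_{q^e}(t)/\wp(\FF_{q^e}(t))$, picks $e$ large enough that a good specialisation exists, and then observes that any dependence over $E$ would survive specialisation. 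Your route is cleaner and avoids the auxiliary extension $\FF_{q^e}$; the paper's route has the advantage of recycling the quantitative bounds verbatim and working uniformly for the additive $\wp$-classes.

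One small caveat on your parenthetical remark that the squarefreeness of $D_i$ in $L[U_0]$ is ``in characteristic $2$ \dots established via the quantity $\Xi_f$'': in characteristic $2$ one has $D_f=\delta_f^2$ identically, so $D_i$ is \emph{never} squarefree in $U_0$, and your UFD argument does not port as stated. The $\Xi_f$ condition is the correct replacement for the $A_n$ input in the Artin--Schreier setting, but it feeds into the pole-order analysis of Proposition~\ref{main prop} rather than into a squarefreeness statement for $D_f$. This does not affect the correctness of your proof in odd characteristic.
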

The lemma is proven using some of the arguments from \cite{Carmon Rudnick}. It then sets the ground
for the application of a final lemma (\cite[Lemma 3.4]{LBS 2}):
\begin{lem}\label{disj lem}
 If the square classes $\delt_x(\mathcal{F}_1), \dots, \delt_x(\mathcal{F}_r)$ are linearly independent,
 then $F_1, \dots, F_r$ are linearly disjoint over $E$.
\end{lem}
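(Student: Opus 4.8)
The plan is to induct on $r$, using at each step the standard fact that Galois extensions $F_1,\dots,F_r$ of $E$ are linearly disjoint if and only if $F_1,\dots,F_{r-1}$ are linearly disjoint and $F_r\cap(F_1\cdots F_{r-1})=E$. The obstruction to the latter will be forced to live inside the quadratic subextensions, which are exactly what the classes $\delt_x(\mathcal F_i)$ parametrize, so their linear independence will yield a contradiction. The argument is the formal analogue of the odd-characteristic proof in \cite{LBS 2}, with square classes in $E^\times/(E^\times)^2$ replaced by Artin--Schreier classes in $E/\wp(E)$, where $\wp(y)=y^2+y$.

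First I would record the structure used. Since each $\mathcal F_i=\mathcal F+\alpha_i$ is a generic monic polynomial of degree $n$, it is separable with $\gal(F_i/E)\cong S_n$, and we may assume $n\ge2$ (the case $n=1$ being trivial). For $n\ge2$ the group $S_n$ has abelianization $\Z/2$ and $A_n$ as its unique index-$2$ subgroup, so $F_i$ has a unique quadratic subextension $K_i/E$, namely the fixed field of $A_n$. By the discussion preceding \eqref{Pellet2} -- writing $\Berl\mathcal F_i=\beta_i^2+\beta_i$, with $\beta_i$ fixed by even permutations of the roots and shifted by $1$ under odd ones -- this subextension is $K_i=E(\beta_i)=E(\wp^{-1}(\Berl\mathcal F_i))$, the Artin--Schreier extension whose class in $E/\wp(E)$ is $\delt_x(\mathcal F_i)$.

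Now the induction. The case $r=1$ is vacuous; a subfamily of a linearly independent family is linearly independent, so $F_1,\dots,F_{r-1}$ are linearly disjoint by the inductive hypothesis, and $N:=F_1\cdots F_{r-1}$ has $\gal(N/E)\cong S_n^{r-1}$. It remains to show $F_r\cap N=E$; suppose instead $M:=F_r\cap N\supsetneq E$. Then $M/E$ is Galois and $\gal(M/E)$ is a nontrivial quotient of $\gal(F_r/E)\cong S_n$; as every proper normal subgroup of $S_n$ is contained in $A_n$, this quotient surjects onto $S_n/A_n\cong\Z/2$, so $M$ contains a quadratic subextension $M_0/E$. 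Being quadratic inside $F_r$, $M_0$ equals $K_r$. On the other hand $M_0\subseteq N$, and since $\gal(N/E)\cong S_n^{r-1}$ has abelianization $(\Z/2)^{r-1}$ with the $r-1$ sign characters as a basis, the additivity of $\wp$ shows that the quadratic subextensions of $N$ are precisely $K_S:=E(\wp^{-1}(\sum_{i\in S}\Berl\mathcal F_i))$ for $\emptyset\ne S\subseteq\{1,\dots,r-1\}$, with class $\sum_{i\in S}\delt_x(\mathcal F_i)$. Hence $M_0=K_S$ for some such $S$, and because distinct classes in $E/\wp(E)$ give distinct extensions, $\delt_x(\mathcal F_r)=\sum_{i\in S}\delt_x(\mathcal F_i)$ -- a nontrivial $\FF_2$-linear relation among $\delt_x(\mathcal F_1),\dots,\delt_x(\mathcal F_r)$, contradicting the hypothesis. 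Therefore $F_r\cap N=E$ and the induction closes.

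The routine parts are the induction and the elementary group theory for $S_n$; the point requiring care is the characteristic-$2$ bookkeeping of $\delt_x$ -- namely, that the correct substitute for the square class of the discriminant is the Artin--Schreier class of Berlekamp's discriminant, that this class cuts out precisely the $A_n$-fixed subfield of $F_i$, and that $E/\wp(E)$ parametrizes the quadratic subextensions of $E$ by an injective, additive correspondence. Once Artin--Schreier theory replaces Kummer theory, each step above is the verbatim analogue of the odd-characteristic argument.
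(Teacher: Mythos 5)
The paper does not actually prove Lemma \ref{disj lem}: it is imported verbatim from \cite[Lemma 3.4]{LBS 2}, with only the remark that the proof there already covers characteristic $2$ once $\delt_x(f)$ is read as the class $[\Berl(f)]$ in $E/\wp(E)$. So there is no in-paper argument to compare yours against, and I have checked your proof on its own merits: it is correct. The induction via the standard criterion for linear disjointness of Galois extensions, the identification of the unique quadratic subextension of $F_i$ (the fixed field of $A_n$, using that $A_n$ is the unique index-$2$ subgroup of $S_n$ for all $n\ge 2$, including $n=4$ where $V_4$ also appears but still lies in $A_4$) with the Artin--Schreier extension of class $\delt_x(\mathcal F_i)$, the observation that every proper normal subgroup of $S_n$ is contained in $A_n$ (so a nontrivial $M=F_r\cap N$ forces $K_r\subseteq M$), and the enumeration of the quadratic subextensions of $N$ through $\operatorname{Hom}(S_n^{r-1},\Z/2)\cong(\Z/2)^{r-1}$ together with the additivity of $\wp$ all check out; the injectivity of the Artin--Schreier correspondence on nonzero classes then converts $K_r=K_S$ into the forbidden relation $\delt_x(\mathcal F_r)=\sum_{i\in S}\delt_x(\mathcal F_i)$. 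This is the standard argument and, as far as I can tell, essentially the one in the cited reference; the only point genuinely specific to this paper's setting is the bookkeeping you already flag, namely that $[\Berl(\mathcal F_i)]$ cuts out exactly the $A_n$-fixed subfield, which follows from the paper's computation $\Berl(\mathcal F_i)=\beta_i^2+\beta_i$ with $\beta_i$ fixed by even permutations of the roots and shifted by $1$ under odd ones.
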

Lemma \ref{disj lem}, together with $F$ being the compositum of $F_1,\dots,F_r$ and with
the classical fact that $\gal(F_i/E) \cong S_n$, easily yields Proposition \ref{snr prop}; see \cite[Section 3]{LBS}
for the full details. 

We note that Lemma \ref{disj lem} was proven in \cite{LBS 2} also in 
characteristic 2. In this case, $\delt_x(f)$ needs to be defined in terms of Berlekamp's discriminant, 
as the residue class $[\Berl(f)]$ in $E/\wp(E)$, where $\wp(y) = y^2 + y$. We shall prove Lemma \ref{indep lem}
in this context, for $n > 2$, in analogy to the proof in \cite{LBS}. This is the last required piece in the proof
of Proposition \ref{snr prop} for characteristic 2.
\begin{proof}
 Consider an arbitrary extension $\FF_{q^e}$, write $t = U_0$, and consider specializations 
 $(U_1,\dots,U_{n-1}) \mapsto a=(a_1,\dots,a_{n-1}) \in \FF_{q^e}^{n-1}$. The specialization of the polynomial 
 $\mathcal{F}$ is 
 $f = x^n + a_{n-1}x^{n-1} + \dots + a_1x + t \in \FF_{q^e}[x,t]$. $E/\wp(E)$ is specialized into
 $\FF_{q^e}(t)/\wp(\FF_{q^e}(t))$, and we shall work in this quotient.
 Examine the proof of Proposition \ref{main prop}. In its course we have in fact shown that if
 $[\Berl(f+\alpha_1)] \neq 0$ (i.e. $a \not \in A_n\cup C(1)$), and $\disc_x(f+\alpha_1)$ is coprime to $\disc_x(f+\alpha_j)$
 for each $j \neq 1$ (i.e. $a \not \in B_n\cup C_n$), then $[\Berl(f+\alpha_1)] + \dots + [\Berl(f+\alpha_r)] \neq 0$ (i.e.
 $a \in G_n$). We have also demonstrated that this must be the case for all but at most 
 $\frac 34 (r+3)n^2 q^{e(n-2)}$ specializations. The same arguments imply that if we further require that
 $[\Berl(f+\alpha_i)] \neq 0$ for each $i = 1, \dots, r$, and that $\disc_x(f+\alpha_i)$ is coprime to $\disc_x(f+\alpha_j)$
 for all $j \neq i$, then for any non-empty set of indices $I \subset \{1,\dots,r\}$, we have 
 $\sum_{i \in I} [\Berl(f+\alpha_i)] \neq 0$, which is equivalent to stating that $[\Berl(f+\alpha_1)], \dots, [\Berl(f+\alpha_r)]$
 are linearly independent in $\FF_{q^e}(t)/\wp(\FF_{q^e}(t))$. Furthermore, this must be the case for all but at most 
 $\frac 34 r(r+3)n^2 q^{e(n-2)}$ of the $q^{e(n-1)}$ specializations $a \in \FF_{q^e}^{n-1}$. In particular, for large enough exponent $e$,
 it is clear that there must exist at least one such specialization. But since a linear dependence in $E/\wp(E)$ would
 survive specialization, it follows that the set of $\delt_x(\mathcal{F}_i) = [\Berl(\mathcal{F}+\alpha_i)]$ must indeed be
 linearly independent in $E/\wp(E)$.
\end{proof}
\begin{remark}
 Lemma \ref{indep lem} is not true in general for $n = 2$, where linear dependence of the classes is possible, as demonstrated
 in the third case of Proposition \ref{n2 proposition}. The result is still valid under more specific conditions, that rule
 out the possibility of dependence occurring in any subset of $\alpha_1,\dots,\alpha_r$. Specifically, we require
 that there is no non-empty subset $S \subset \{1,\dots,r\}$ such that $\{\alpha_i : i \in S\}$ all share the same linear
 term and $\sum_{i \in S} (x^2 + \alpha_i) = 0$.
\end{remark}
7
\section{Computations}\label{sec:computation}
In this section we calculate $\delta(f),\ \xi(f)$ for
the polynomials $f = x^n + a x^2 + t$ with $n$ odd, and $f = x^n + ax^{n-1}+bx+t$ with $n$ even.
To do so, we will compute $\disc(f)$ and $\pisc(f)$. Writing $f(x) = g(x^2)+xh(x^2)$,
we see from \eqref{pisc formula 2}, that (working modulo 2), 
\begin{equation}
\delta^2(f) = \disc(f) = \pisc(f) = \rest(h(x),g(x))^2 
\end{equation}
Hence $\delta(f) = \rest(h(x),g(x))$. The formula for resultants of binomials is well known (see \cite[Lemma 3]{Swan})
and we obtain
\begin{eqnarray*}
&\delta(x^n + a x^2 + t)  =  \rest(x^{\frac{n-1}2},ax+t) = t^{\frac{n-1}2} \\
%\end{equation*}
%\begin{equation*}
&\delta(x^n + ax^{n-1}+bx+t)  =  \rest(ax^{\frac{n}2-1}+b,x^{\frac{n}2}+t)
= a^{\frac{n}2}t^{\frac{n}2-1}+b^{\frac{n}2}
\end{eqnarray*}
Note that the above computations for the resultants are valid also in characteristic 0,
so we have also computed $\pisc(f)$:
\begin{align*}
&\pisc(x^n + a x^2 + t) =  \rest(x^{\frac{n-1}2},ax+t)^2 = t^{n-1} \\
&\begin{aligned}\pisc(x^n + ax^{n-1}+bx+t) &= \rest(ax^{\frac{n}2-1}+b,x^{\frac{n}2}+t)^2 \\
&= a^nt^{n-2}+b^n+2a^{\frac{n}2}b^{\frac{n}2}t^{\frac{n}2-1}
\end{aligned}
\end{align*}
We now need to compute $\disc(f)$ in characteristic 0. 

For odd $n$, we apply the general formula for trinomial discriminants (\cite[Theorem 2]{Swan})
and obtain
% \begin{equation*}
%  \begin{split}
%   &\disc(x^n+ax^2+t) = (-1)^{\frac{(n-1)n}2}\rest(x^n+ax^2+t,nx^{n-1}+2ax) \\
%   &= (-1)^{\frac{n-1}2}\rest(0x^n+(1-\tfrac2n)ax^2+t,nx^{n-1}+2ax) \\
%   &= (-1)^{\frac{n-1}2}n^{n-2}\rest(\tfrac{n-2}nax^2+t,nx^{n-1}+2ax) \\
%   &= (-1)^{\frac{n-1}2}n^{n-2}t\rest(\tfrac{n-2}nax^2+t,nx^{n-2}+2a) \\
%   &= (-1)^{\frac{n-1}2}n^{n-2}t\big((\tfrac{n-2}na)^{n-2}(2a)^2 + n^2t^{n-2}\big) \\
%   &= (-1)^{\frac{n-1}2}\big(n^nt^{n-1}+4(n-2)^{n-2}a^nt\big) 
%  \end{split}
% \end{equation*}
\begin{equation*}
 \begin{split}
  &\disc(x^n+ax^2+t) = (-1)^{\frac{n-1}2}\big(n^nt^{n-1}+4(n-2)^{n-2}a^nt\big) 
 \end{split}
\end{equation*}
It is now immediate to compute 
\begin{equation*}
\xi(x^n+ax^2+t) = \frac{\pisc - \disc}{4} \mod 2 = \frac{(-1)^\frac{n-1}{2}n^n-1}{4}t^{n-1}+a^nt.
\end{equation*}
The computation for even $n$ is somewhat longer and more complicated:
\allowdisplaybreaks
%\begin{equation*}
 \begin{align*}
  &\disc(x^n+ax^{n-1}+bx+t) \\
  &= (-1)^{\frac{(n-1)n}2}\rest(x^n+ax^{n-1}+bx+t,nx^{n-1}+(n-1)ax^{n-2}+b) \\
  &= (-1)^{\frac{n}2}\tfrac1t\rest(x^n+ax^{n-1}+bx+t,nx^{n}+(n-1)ax^{n-1}+bx) \\
  &= (-1)^{\frac{n}2}\tfrac1t\rest(x^n+ax^{n-1}+bx+t,ax^{n-1}+(n-1)bx+nt) \\
  &= (-1)^{\frac{n}2}\tfrac1t\rest(x^n+(2-n)bx+(1-n)t,ax^{n-1}+(n-1)bx+nt) \\ 
  &= (-1)^{\frac{n}2+1}\tfrac1ta^n\rest(-x^n+(n-2)bx+(n-1)t,x^{n-1}+\tfrac{(n-1)b}ax+\tfrac{nt}a) \\
  &
  \begin{aligned}
   = (-1)^{\frac{n}2+1}\tfrac1ta^n\rest\big(
   & \tfrac{(n-1)b}ax^2+((n-2)b+\tfrac{nt}a)x+(n-1)t, \\
   & x^{n-1}+\tfrac{(n-1)b}ax+\tfrac{nt}a\big)     
  \end{aligned} \\
  &
  \begin{aligned}
   =(-1)^{\frac{n}2+1}\tfrac1{(n-1)t^2}a^n\rest\big(
   &\tfrac{(n-1)b}ax^2+((n-2)b+\tfrac{nt}a)x+(n-1)t, \\
   & x^n-(n-2)bx-(n-1)t\big)     
  \end{aligned}\\
  &
  \begin{aligned}
      =(-1)^{\frac{n}2+1}\tfrac1{(n-1)t^2}\rest(
      & (n-1)bx^2+((n-2)ab+nt)x+(n-1)ta,\\
      & x^n-(n-2)bx-(n-1)t)
   \end{aligned} \\
  &= (-1)^{\frac{n}2+1}\tfrac1{(n-1)t^2}\rest(\alpha x^2+\beta x+\gamma,x^n-ux-v)
 \end{align*}
%\end{equation*}
% \begin{align*}
%   &\disc(x^n+ax^{n-1}+bx+t&) \\
%   &= (-1)^{\frac{(n-1)n}2}\rest(x^n+ax^{n-1}+bx+t,nx^{n-1}+(n-1)ax^{n-2}+b)& \\
%   &= (-1)^{\frac{n}2}\tfrac1t\rest(x^n+ax^{n-1}+bx+t,nx^{n}+(n-1)ax^{n-1}+bx)& \\
%   &= (-1)^{\frac{n}2}\tfrac1t\rest(x^n+ax^{n-1}+bx+t,ax^{n-1}+(n-1)bx+nt)& \\
%   &= (-1)^{\frac{n}2}\tfrac1t\rest(x^n+(2-n)bx+(1-n)t,ax^{n-1}+(n-1)bx+nt&) \\
%   &= (-1)^{\frac{n}2+1}\tfrac1ta^n\rest(-x^n+(n-2)bx+(n-1)t,x^{n-1}+\tfrac{(n-1)b}ax+\tfrac{nt}a) \\ 
%   &= (-1)^{\frac{n}2+1}\tfrac1ta^n\rest(\tfrac{(n-1)b}ax^2+((n-2)b+\tfrac{nt}a)x+(n-1)t,
%   x^{n-1}+\tfrac{(n-1)b}ax+\tfrac{nt}a) \\
%   &= (-1)^{\frac{n}2+1}\tfrac1{(n-1)t^2}a^n\rest(&\tfrac{(n-1)b}ax^2+((n-2)b+\tfrac{nt}a)x+(n-1)t,
%   \\ && x^n-(n-2)bx-(n-1)t) \\ 
%   &= (-1)^{\frac{n}2+1}\tfrac1{(n-1)t^2}\rest((n-1)bx^2+((n-2)ab+nt)x+(n-1)ta,
%   x^n-(n-2)bx-(n-1)t) \\
%   &= (-1)^{\frac{n}2+1}\tfrac1{(n-1)t^2}\rest(\alpha x^2+\beta x+\gamma,x^n-ux-v)
% \end{align*}
where $\alpha = (n-1)b,\ \beta = ((n-2)ab+nt),\ \gamma=(n-1)ta,\ u=(n-2)b$, and $v=(n-1)t$ are defined by the last equivalence. 
Note that $\alpha,\gamma,v$ are odd and $\beta,u$ are even. 
The number of terms in the last resultant is unbounded as $n$ increases.
However, we are only interested in $\disc(f) \mod 8$, 
and as $\beta$ is even, only finitely many of the terms will be non-zero modulo 8.
Therefore we will continue our computation modulo 8.

Let $r_{1,2} = \frac{-\beta \pm \sqrt{\beta^2-4\alpha\gamma}}{2\alpha} $ be the roots of 
$\alpha x^2+\beta x+\gamma$. Let $R_k = \alpha^k(r_1^k+r_2^k)$.
It is easy to see that 
\begin{equation*}
 R_0 = 2,\quad
 R_1 = -\beta,\quad
 \forall k \ge 2, R_k = -\beta R_{k-1} - \alpha\gamma R_{k-2}
\end{equation*}
Thus for all $k$, $R_k$ is a polynomial in $\beta,\alpha\gamma$. Using the recursion formula,
one may show by induction that
\begin{equation}
 R_k = \sum_{0 \le l \le k/2} (-1)^{k-l}\big(\tbinom{k-l}{l}+\tbinom{k-l-1}{l-1}\big)(\alpha\gamma)^{l}\beta^{k-2l}
\end{equation}

As we are only interested in $R_k \mod 8$,  we may discard all monomials
containing powers of $\beta$ greater than 3, and obtain:
\begin{equation}
 R_k \mod \beta^3  = \begin{cases}
        (-1)^m\big(2(\alpha\gamma)^m - m^2(\alpha\gamma)^{m-1}\beta^2\big) & k = 2m \text{ even}\\
        (-1)^mk(\alpha\gamma)^{m-1}\beta & k = 2m-1 \text{ odd} 
       \end{cases}
\end{equation}

We now compute:
\begin{equation*}
 \begin{split}
 & \rest(\alpha x^2+\beta x+\gamma,x^n-ux-v) 
 = \alpha^n(r_1^n-ur_1-v)(r_2^n-ur_2-v) \\
 & = \alpha^n \big((r_1r_2)^n + u^2r_1r_2+ v^2 -ur_1r_2(r_1^{n-1}+r_2^{n-1}) - v(r_1^n+r_2^n) + uv(r_1+r_2)\big) \\
 & = \gamma^n + u^2\alpha^{n-1}\gamma+v^2\alpha^n - u\gamma R_{n-1} - vR_n - uv\alpha^{n-1}\beta
 \end{split}
\end{equation*}

We can now replace $\alpha,\beta,\gamma,u,v,R_{n-1},R_n$ by their actual values.
We first expand the part of the expression not involving $R_{n-1}, R_n$:
\begin{equation*}
 \begin{split}
  & \gamma^n + u^2\alpha^{n-1}\gamma+v^2\alpha^n - uv\alpha^{n-1}\beta = 
  \gamma^n + \alpha^{n-1}(u^2\gamma+v^2\alpha-uv\beta) = \\
  & (n-1)^{n}\big(a^nt^n + b^{n-1}\big((n-2)^2ab^2t +(n-1)^2bt^2 -(n-2)bt((n-2)ab+nt)\big)\big) = \\
  % & (n-1)^{n}\Big(a^nt^n + b^{n-1}\big(((n-2)^2 - (n-2)^2) ab^2t +((n-1)^2-(n-2)n)bt^2\big)\Big) = \\
  & (n-1)^n(a^nt^n + b^nt^2) 
 \end{split}
\end{equation*}

Next, set $n = 2m$, and expand
\begin{equation*}
 \begin{split}
  & (-1)^m(u\gamma R_{n-1} + vR_n) = \\
  & (n-1)u\alpha^{m-1}\gamma^m\beta + 2v\alpha^m\gamma^m - m^2v\alpha^{m-1}\gamma^{m-1}\beta^2 \equiv \\
  & (\alpha\gamma)^{m-1}(u\gamma\beta + 2v\alpha\gamma -m^2v\beta^2) = \\
  & (n-1)^{n-1}(abt)^{m-1}((n-2)(n-1)abt\beta + 2(n-1)^2abt^2 -m^2t\beta^2) \equiv \\
  & 2(n-1)a^mb^mt^{m+1} + (n-2)^2 a^{m+1}b^{m+1}t^m - m^2n^2a^{m-1}b^{m-1}t^{m+2} \pmod 8
 \end{split}
\end{equation*}
where the cancellations in the last congruence are due to identities such as $(n-1)^2 \equiv 1 \pmod 8$, 
$n(n-2) \equiv 0 \pmod 8$ and $ax \equiv a \pmod 8$ when $a$ is divisible by 4 and $x$ is odd.
Note that of the last two terms, exactly one has coefficient $\equiv 0 \pmod 8$, and the other coefficient
$\equiv 4 \pmod 8$, determined by the parity of $m$.

Combining all expansions, we obtain
\begin{equation*}
 \begin{split}
  &\disc(x^n+ax^{n-1}+bx+t) \\
  &= (-1)^{\frac{n}2+1}\tfrac1{(n-1)t^2}\rest(\alpha x^2+\beta x+\gamma,x^n-ux-v) \\
  &\equiv (-1)^{m+1}(n-1)^{n-1}\big(a^nt^{n-2}+b^n) + 2a^mb^mt^{m-1} \\
  &\quad+4\begin{cases}
    a^{m+1}b^{m+1}t^{m-2} & m \equiv 0 \pmod 2\\
    a^{m-1}b^{m-1}t^{m} & m \equiv 1 \pmod 2
   \end{cases}
 \end{split}
\end{equation*}

Noting also that $a^nt^{n-2}+b^n = \delta_f^2$ and 
\begin{equation}
 (-1)^{m+1}(n-1)^{n-1} \mod 8 = 
 \begin{cases}
  1 & m \equiv 0,1 \pmod 4 \\
  5 & m \equiv 2,3 \pmod 4
 \end{cases}
\end{equation}

we can now calculate 
\begin{equation*}
\begin{split}
 & \xi(x^n+ax^{n-1}+bx+t) = \frac{\pisc - \disc}{4} \mod 2 = \\
 & \begin{cases}
           0 &m \equiv 0,1\pmod 4 \\
           \delta^2_f &m \equiv 2,3\pmod 4 
          \end{cases}
          +
          \begin{cases}
           a^{m+1}b^{m+1}t^{m-2} &m \equiv 0,2\pmod 4 \\
           a^{m-1}b^{m-1}t^{m} &m \equiv 1,3\pmod 4 
          \end{cases}
\end{split}
\end{equation*}

as claimed.\qed

\section*{Acknowledgements}
The author wishes to thank Lior Bary-Soroker for introducing him to Berlekamp's discriminant 
and for helpful discussions, 
and to thank Ze\`ev Rudnick for his guidance and many helpful suggestions.

\end{document}